\documentclass[preprint,12pt]{elsarticle}
\usepackage{amssymb, epsfig,amssymb, latexsym}
\usepackage{amsfonts,psfrag,amsmath,amsthm,bbm,color,url,cancel,multirow,lineno}
\usepackage[font=small,labelfont=bf]{caption}

\journal{J. Comput. Appl. Math.}

\newcommand{\lp}{\left(}
\newcommand{\rp}{\right)}

\newcommand{\lnorm}{\left\|}
\newcommand{\rnorm}{\right\|}

\newtheorem{remark}{Remark}[section]
\newtheorem{lemma}{Lemma}[section]
\newtheorem{theorem}{Theorem}[section]

\newtheorem{proposition}{Proposition}[section]
\newtheorem{definition}{Definition}[section]
\newtheorem{assumption}{Assumption}[section]

\def\PP{{{\rm l}\kern - .15em {\rm P} }}
\def\PN2{{\PP_{N}-\PP_{N-2}}}


\newcommand{\R}{\mathbbm{R}}


\newcommand{\cE}{\mathcal{E}}
\newcommand{\cF}{\mathcal{F}}
\newcommand{\cH}{\mathcal{H}}
\newcommand{\cO}{\mathcal{O}}

\newcommand{\bfeta}{\boldsymbol{\eta}}

\newcommand{\bPhir}{\boldsymbol{\Phi}_r}
\newcommand{\bphi}{\boldsymbol{\varphi}}


\newcommand{\be}{\boldsymbol{e}}
\newcommand{\bff}{\boldsymbol{f}}

\newcommand{\bH}{\boldsymbol{H}}
\newcommand{\bL}{\boldsymbol{L}}

\newcommand{\br}{\boldsymbol{r}}

\newcommand{\bu}{\boldsymbol{u}}

\newcommand{\bur}{{\boldsymbol{u}}_r}

\newcommand{\bv}{\boldsymbol{v}}
\newcommand{\bV}{\boldsymbol{V}}
\newcommand{\bvr}{{\boldsymbol{v}}_r}
\newcommand{\bw}{\boldsymbol{w}}

\newcommand{\bwr}{{\boldsymbol{w}}_r}

\newcommand{\bx}{\boldsymbol{x}}
\newcommand{\bX}{\boldsymbol{X}}

\newcommand{\bXr}{{\bf X}^r}

\newcommand{\bz}{\boldsymbol{z}}



\newcommand{\obur}{\overline{\boldsymbol{u}_r}}

\newcommand{\obr}[1]{\overline{\boldsymbol{#1}}^r}


\newcommand{\CinvNabla}{C_{inv}^{\nabla}(r)}

\newcommand{\Deltar}{\Delta_r}



\newcommand{\deleted}[1]{{}}

\newcommand{\half}{\frac{1}{2}}

\begin{document}

\begin{frontmatter}

\title{Numerical Analysis of the Leray Reduced Order Model}

 \author[xx]{Xuping Xie}
 \ead{xupingxy@vt.edu}
 \ead[url]{http://www.math.vt.edu/people/xupingxy}
 \author[dw]{David Wells}
 \ead{wellsd2@rpi.edu}
 \ead[url]{http://homepages.rpi.edu/~wellsd2}
 \author[zw]{Zhu Wang}
 \ead{wangzhu@math.sc.edu}
 \ead[url]{http://people.math.sc.edu/wangzhu}
 \author[xx]{Traian Iliescu\corref{cor1}}
 \ead{iliescu@vt.edu}
 \ead[url]{http://www.math.vt.edu/people/iliescu}
 \cortext[cor1]{corresponding author}
 \address[xx]{Department of Mathematics, Virginia Tech, Blacksburg, VA 24061, U.S.A.}
 \address[dw]{Department of Mathematical Sciences, Rensselaer Polytechnic Institute, Troy, NY 12180, U.S.A.}
 \address[zw]{Department of Mathematics, University of South Carolina, Columbia, SC 29208, U.S.A.}



\author{}

\address{}

\begin{abstract}
Standard ROMs generally yield spurious numerical oscillations in the simulation of convection-dominated flows.
Regularized ROMs use explicit ROM spatial filtering to decrease these spurious numerical oscillations.
The Leray ROM is a recently introduced regularized ROM that utilizes explicit ROM spatial filtering of the convective term in the Navier-Stokes equations.

This paper presents the numerical analysis of the finite element discretization of the Leray ROM.
Error estimates for the ROM differential filter, which is the explicit ROM spatial filter used in the Leray ROM, are proved.
These ROM filtering error estimates are then used to prove error estimates for the Leray ROM.
Finally, both the ROM filtering error estimates and the Leray ROM error estimates are numerically investigated in the simulation of the two-dimensional Navier-Stokes equations with an analytic solution. 
\end{abstract}

\begin{keyword}
Reduced order model \sep
proper orthogonal decomposition \sep                
regularized model \sep 
Leray model \sep
spatial filter.
\end{keyword}

\end{frontmatter}


\clearpage 

\section{Introduction}

{\it Reduced order models (ROMs)} have been successfully used in the numerical simulation of structure-dominated fluid flows (see, e.g.,~\cite{ballarin2016fast,ballarin2015supremizer,bertagna2014model,bistrian2015improved,cordier2010calibration,ghommem2013mode,gouasmi2016characterizing,gunzburger2017ensemble,hesthaven2015certified,HLB96,kaiser2014cluster,noack2011reduced,perotto2017higamod,quarteroni2015reduced,san2015principal}).
Since they use a small number of carefully chosen basis functions (modes), ROMs can represent a computationally efficient alternative to standard numerical discretizations.
For convection-dominated flows, however, standard ROMs generally yield inaccurate results, usually in the form of spurious numerical oscillations (see, e.g.,~\cite{giere2015supg,wang2012proper}).
To mitigate these ROM inaccuracies, several numerical stabilization techniques have been proposed over the years (see, e.g.,~\cite{balajewicz2013low,balajewicz2016minimal,benosman2016robust,kalashnikova2010stability,osth2014need,wang2012proper,wang20162d}).
{\it Regularized ROMs (Reg-ROMs)} are recently proposed stabilized ROMs for the numerical simulation of convection-dominated flows~\cite{iliescu2017regularized,sabetghadam2012alpha,wells2017evolve}.
These Reg-ROMs use {\it explicit ROM spatial filtering} to smooth various ROM terms and thus increase the numerical stability of the resulting ROM.
This idea goes back to the great Jean Leray~\cite{leray1934sur}, who used it in the mathematical study of the Navier-Stokes equations (NSE).
In standard CFD, this idea was used to develop regularized models for the numerical simulation of turbulent flows~\cite{geurts2003regularization,layton2012approximate}).
In a ROM setting, a Reg-ROM was first used in~\cite{sabetghadam2012alpha} in the numerical simulation of the 1D Kuramoto-Sivashinsky equations.
A different Reg-ROM was proposed in~\cite{wells2017evolve} for the numerical simulation of the 3D NSE.
Reg-ROMs were also employed for the stabilization of ROMs in the numerical simulation of a stochastic Burgers equation~\cite{iliescu2017regularized}.

Reg-ROMs were successful in the numerical simulation of convection-dominated flows.
Two Reg-ROMs (the Leray ROM and the evolve-then-filter ROM) were used in the numerical simulation of a 3D flow past a circular cylinder at a Reynolds number $Re=1000$~\cite{wells2017evolve}.
These two Reg-ROMs produced accurate results in which the spurious numerical oscillations of standard ROMs were significantly decreased.
Despite the Reg-ROMs' success, to our knowledge there is no numerical analysis of the Reg-ROMs and the explicit ROM spatial filter used in their development.
In this paper, we take a first step in this direction and prove error estimates for the finite element discretization of (i) the {\it Leray ROM}~\cite{iliescu2017regularized,sabetghadam2012alpha,wells2017evolve}, which is a Reg-ROM; and (ii) the {\it ROM differential filter}, which is an explicit ROM spatial filter.

The rest of the paper is organized as follows:
In Section~\ref{sec:notation-preliminaries}, we present some notation and preliminaries.
In Section~\ref{sec:l-rom}, we present the ROM differential filter and the Leray ROM.
In Section~\ref{sec:error-analysis}, we prove error estimates for the ROM differential filter and the Leray ROM.
In Section~\ref{sec:numerical-results}, we verify numerically the error estimates proved in Section~\ref{sec:error-analysis}.
Finally, in Section~\ref{sec:conclusions}, we draw conclusions and outline possible future research directions.

\section{Notation and Preliminaries}
	\label{sec:notation-preliminaries}
	
We consider the numerical solution of the incompressible {\it Navier-Stokes equations (NSE)}:
\begin{equation}
\label{eq:nse}
\left\{ 
\begin{array}{cc}
\dfrac{\partial \bu}{\partial t} - \nu \Delta \bu + ( \bu \cdot \nabla )  \bu + \nabla p = \bff , & \text{ in } \Omega \times (0,T], \\
\nabla \cdot \bu = 0 , & \text{ in } \Omega \times (0,T] , \\
\bu = 0, & \text{ on } \partial \Omega \times (0,T] , \\
\bu(\bx, 0) = \bu^0(\bx), & \text{ in }  \Omega ,
\end{array} 
\right.
\end{equation}
where $\bu(\bx, t)$ and $p(\bx, t)$ represent the fluid velocity and pressure of a flow in the region $\Omega$, respectively, for $\bx\in \Omega$, $t\in [0, T]$, and $\Omega \subset \mathbb{R}^n$ with $n= 2$ or $3$; 
the flow is bounded by walls and driven by the force $\bff(\bx, t)$;  
$\nu$ is the reciprocal of the Reynolds number; 
and $\bu^0(\bx)$ denotes the initial velocity. 
We also assume that the boundary of the domain, $\partial \Omega$, is polygonal when $n= 2$ and is polyhedral when $n= 3$.

The following functional spaces and notations will be used in the paper:
\begin{equation*}
	\bX=\bH^1_0(\Omega) =\left\{\bv\in [L^2(\Omega)]^n: \nabla \bv \in [L^2(\Omega)]^{n\times n} \text{ and } \bv= {\bf 0} \text{ on } \partial \Omega \right\},
\end{equation*}	
\begin{equation*}
	Q=L^2_0(\Omega) = \left\{q\in L^2(\Omega): \int_{\Omega} q\, d {\bx} = 0 \right\}, 
\end{equation*}	
\begin{equation*}	
	\bV=\left\{\bv\in \bX: ( \nabla \cdot \bv, q)  = 0, \forall\, q \in Q \right\}, \text{ and  }
\end{equation*}	
\begin{equation*}	
	\bV^h=\left\{\bv_h \in \bX^h: ( \nabla \cdot \bv_h, q_h )  = 0, \forall\, q_h \in Q^h \right\}, 
\end{equation*}
where $\bX^h \subset \bX$ and $Q^h\subset Q$ are the finite element (FE) spaces of the velocity and pressure, respectively, and $h$ is the quasi-uniform mesh size.
We consider the div-stable pair of FE spaces $(\bX^h / Q^h) = (\PP^{m} / \PP^{m-1}), \, m \geq 2$ \cite{layton2008introduction}. 
We emphasize, however, that our analysis extends to more general FE spaces. 

Let $\cal H$ be a real Hilbert space endowed with inner product $(\cdot, \cdot)_{\cal H}$ and norm $\|\cdot\|_{\cal H}$. 
Let the trilinear form $b^*(\cdot, \cdot, \cdot)$ be defined as 
\begin{equation*}
b^*( \bu, \bv, \bw)  =\frac{1}{2}\left[ ( ( \bu\cdot\nabla ) \bv, \bw) - ( ( \bu\cdot\nabla ) \bw, \bv) \right] .
\label{eq:b*}
\end{equation*}

\begin{lemma}[see Lemma 13, Lemma 14 and Lemma 18 in \cite{layton2008introduction}]
\label{lem:b*}
For any functions $\bu, \, \bv, \bw \in \bX$, the skew-symmetric trilinear form $b^*(\cdot, \cdot, \cdot)$ satisfies
\begin{equation}
b^*(\bu, \bv, \bv) = 0,
\label{eq:b_skewsymm}
\end{equation}
\begin{equation} 
b^*(\bu, \bv , \bw)\leq C \|\nabla \bu\| \|\nabla \bv\| \|\nabla \bw\|, 
\label{eq:b_bound_0}
\end{equation}
and a sharper bound 
\begin{equation} 
b^*(\bu, \bv , \bw)\leq C \sqrt{\|\bu\| \|\nabla \bu\|}\|\nabla \bv\| \|\nabla \bw\|. 
\label{eq:b_bound_1}
\end{equation}
\end{lemma}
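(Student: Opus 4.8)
These three identities and bounds are classical properties of the explicitly skew\nobreakdash-symmetrized trilinear form; in fact the statement is quoted directly from \cite{layton2008introduction}, so the plan is only to recall why each one holds.

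Identity \eqref{eq:b_skewsymm} is immediate from the definition of $b^*$: setting $\bw=\bv$ makes the two terms inside the bracket identical, so their difference vanishes. In particular this requires neither integration by parts nor any regularity beyond $\bu,\bv\in\bX$, which is precisely why $b^*$ is used in place of the raw convective form $((\bu\cdot\nabla)\bv,\bw)$ in the discrete scheme.

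For the continuity estimate \eqref{eq:b_bound_0} I would bound the two terms of $b^*$ separately. Hölder's inequality with exponents $(4,2,4)$ gives $|((\bu\cdot\nabla)\bv,\bw)|\le\|\bu\|_{L^4}\|\nabla\bv\|\|\bw\|_{L^4}$, and then the Sobolev embedding $\bX=\bH^1_0(\Omega)\hookrightarrow[L^4(\Omega)]^n$ (valid for $n=2,3$) together with the Poincaré inequality yields $\|\bu\|_{L^4}\le C\|\nabla\bu\|$ and $\|\bw\|_{L^4}\le C\|\nabla\bw\|$; the term $((\bu\cdot\nabla)\bw,\bv)$ is treated identically after relabeling, and collecting the constants gives \eqref{eq:b_bound_0}.

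The sharper bound \eqref{eq:b_bound_1} is obtained by keeping the first factor in a weaker norm. When $n=2$ one applies the Ladyzhenskaya interpolation inequality $\|\bu\|_{L^4}\le C\|\bu\|^{1/2}\|\nabla\bu\|^{1/2}$ in the estimate above, which converts the $\|\nabla\bu\|$ factor into $\sqrt{\|\bu\|\|\nabla\bu\|}$ while leaving two gradient factors; when $n=3$ one instead uses Hölder with exponents $(3,2,6)$, the Gagliardo--Nirenberg interpolation $\|\bu\|_{L^3}\le C\|\bu\|^{1/2}\|\nabla\bu\|^{1/2}$, and the embedding $\bH^1_0(\Omega)\hookrightarrow[L^6(\Omega)]^n$, which again yields the factor $\sqrt{\|\bu\|\|\nabla\bu\|}$ and the two gradient factors. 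Doing the same for the second term of $b^*$ and using Poincaré once more to absorb any lower\nobreakdash-order norm gives \eqref{eq:b_bound_1}. The only mild subtlety is selecting the Hölder exponents and the Sobolev embedding according to the spatial dimension, so that the constant $C$ depends only on $\Omega$ and $n$; since every ingredient is a textbook inequality, I do not anticipate a genuine obstacle in any of the three steps.
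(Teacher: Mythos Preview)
Your sketch is correct and is precisely the standard argument for these classical facts. Note, however, that the paper does not actually prove this lemma: it is stated with a citation to \cite{layton2008introduction} and used as a black box, so there is no ``paper's own proof'' to compare against. Your outline---skew-symmetry by inspection, H\"older $(4,2,4)$ plus the Sobolev embedding $\bH^1_0\hookrightarrow\bL^4$ for \eqref{eq:b_bound_0}, and the Ladyzhenskaya/Gagliardo--Nirenberg interpolation (with the $(3,2,6)$ splitting in 3D) for \eqref{eq:b_bound_1}---is exactly the textbook route in the cited reference, and there are no gaps.
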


The weak formulation of the NSE \eqref{eq:nse} reads: 
Find $\bu \in \bX$ and $p \in Q$ such that
\begin{equation}
\left\{ 
	\begin{array}{cc}
		\left(  \dfrac{\partial \bu}{\partial t} , \bv \right) 
		+  \nu (  \nabla \bu , \nabla \bv ) 
		+ b^*( \bu,\bu,\bv) 
		- ( p , \nabla \cdot \bv )
		= ( \bff, \bv) , 
		& \quad \forall \, \bv \in \bX, \\
		( \nabla \cdot \bu , q ) 
		= 0,
		& \quad \forall \, q \in Q .
	\end{array} 
\right.
\label{eq:nse_weak}
\end{equation}
To ensure the uniqueness of the solution to \eqref{eq:nse_weak}, we make the following regularity assumptions (see Definition 29, Proposition 15, and Remark 10 in \cite{layton2008introduction}): 
\begin{assumption}
\label{assumption_regularity}
In \eqref{eq:nse}, we assume that 
$\bff\in L^2( 0, T; \bL^2(\Omega))$, 
$\bu^0\in \bV$, 
$\bu\in L^2( 0, T; \bX ) \bigcap L^{\infty} ( 0, T; \bL^2(\Omega) )$, 
$\nabla \bu \in ( L^4( 0, T; L^2(\Omega)) )^{n\times n}$, 
$\bu_t\in L^2( 0, T; \bX^*)$, and 
$p\in L^2 ( 0, T; Q )$. 
\end{assumption}

For any positive integer $M$, consider the time instances $t_k = k \, \Delta t, \, k= 0, \ldots, M$, where $\Delta t = T / M$.
Denote the solution of \eqref{eq:nse_weak} at time $t_k$ to be $\bu^{k} = \bu(t_k)$ and the force at $t_k$ to be $\bff^{k} = \bff(t_k)$, respectively. 
We make the following assumption, which will be used in Theorem~\ref{theorem_error}:
\begin{assumption}
	We assume that the solution of~\eqref{eq:nse_weak} satisfies the following stability estimate for all integers $\widetilde{M}$ such that $1 \leq \widetilde{M} \leq M$:
	\begin{equation}
		\| \bu^{\widetilde{M}} \|^2
		+ \Delta t \, \sum \limits_{k=0}^{\widetilde{M}-1} \| \nabla \bu^{k+1} \|^2
		\leq C \, ,
		\label{eqn:continuous-stability}
	\end{equation}
	where $C$ is a constant that does not depend on $\Delta t$, but can depend on the initial data.
	\label{assumption:continuous-stability}
\end{assumption}

For $k= 0, \ldots, M$, denote the FE approximate solution of \eqref{eq:nse_fe} at $t_k = k \, \Delta t$ to be $\bu_h^{k} = \bu_h(t_k)$. 
The FE semidiscretization of \eqref{eq:nse_weak} can be written as follows:
Find $\bu_h \in \bV^h$ such that
\begin{equation}
	\left(  \dfrac{\partial \bu_h}{\partial t} , \bv_h \right) 
	+  \nu (  \nabla \bu_h , \nabla \bv_h ) 
	+ b^*( \bu_h,\bu_h,\bv_h )
	= ( \bff, \bv_h ) , 
	\quad \forall \, \bv_h \in \bV^h
\label{eq:nse_fe}
\end{equation}
and 
$\bu_h(\cdot, 0) = \bu_h^0 \in \bV^h$.

\begin{assumption}[Finite Element Error]
\label{lem:fem}
We assume that the FE approximation $\bu_h$ of the full discretization of~\eqref{eq:nse_fe} satisfies the following error estimate:
\begin{equation}
\|\bu - \bu_h\| + h \| \nabla (\bu - \bu_h) \| \leq C (h^{m+1} + \Delta t). 
\label{eq:fem_err_u}
\end{equation}
We also assume the following standard approximation property: 
\begin{equation}
\inf\limits_{q_h \in Q^h}\|p- q_h\| \leq C h^m.
\label{eq:fem_err_p}
\end{equation} 
The constant $C$ in~\eqref{eq:fem_err_u}--\eqref{eq:fem_err_p} is a constant that does not depend on $h, m, \Delta t$, but can depend on the initial data.
\end{assumption}

\section{The Leray ROM (L-ROM)}
	\label{sec:l-rom}
	
In this section, we present the Leray ROM, which we will analyze in Section~\ref{sec:error-analysis}.
To this end, we present the standard ROM (Sections~\ref{sec:pod} and~\ref{sec:g-rom}) and the explicit ROM differential filter (Section~\ref{sec:rom-df}), which will be used to construct the Leray ROM (Section~\ref{sec:model}).

\subsection{Proper Orthogonal Decomposition}
	\label{sec:pod}

We briefly describe the POD method, following \cite{KV01}.
For a detailed presentation, the reader is referred to \cite{HLB96,volkwein2011model}.

Consider an ensemble of snapshots
$ \mathcal{R} := \mbox{span}\left\{ \bu(\cdot, t_0), \ldots, \bu(\cdot, t_M) \right\}$, which is a collection of velocity data from either numerical simulation results or experimental observations at time $t_i = i \, \Delta t$, $i=0, \ldots, M$.  
The POD method seeks a low-dimensional basis $\{ \bphi_1, \ldots, \bphi_r\}$ in $\cH$ 
that optimally approximates the snapshots, i.e., solves the minimization problem:
$
  \min \frac{1}{M+1} \sum_{\ell=0}^M 
  \left\| \bu(\cdot, t_{\ell}) - 
  \sum_{j=1}^r \left( \bu(\cdot, t_{\ell}),\bphi_j(\cdot) \right)_{\cH} \, \bphi_j(\cdot) 
  \right\|_{\cH}^2 
$
subject to the conditions $(\bphi_j, \bphi_i)_{\cH} = \delta_{ij}, \ 1 \leq i, j \leq r$, where $\delta_{ij}$ is the Kronecker delta. 
To solve this minimization problem, one can consider the eigenvalue problem 
$
K \, \bz_j = \lambda_j \, \bz_j,  \text{ for }j=1, \ldots, r,
$
where $K \in \R^{(M+1) \times (M+1)}$ 
is the snapshot correlation matrix with entries  
$\displaystyle K_{k\ell} = \frac{1}{M+1} \left( \bu(\cdot, t_{\ell}) , \bu(\cdot, t_k) \right)_{\cH}$ 
for $\ell, k = 0, \ldots, M$,
$\bz_j$ is the $j$-th eigenvector, and 
$\lambda_j$ is the associated eigenvalue. 
The eigenvalues are positive and sorted in descending order $\lambda_1\geq \ldots \geq \lambda_d > 0$, where $d$ is the rank of $\mathcal{R}$. 
It can then be shown that the POD basis functions are given by
$
\bphi_{j}(\cdot) = \frac{1}{\sqrt{\lambda_j}} \, \sum_{{\ell}= 0}^{M} (\bz_j)_{\ell} \, \bu(\cdot , t_{\ell}),
\  1 \leq j \leq r,
$
where $(\bz_j)_{\ell}$ is the $\ell$-th component of the eigenvector $\bz_j$. 
It can also be shown that the following error formula holds \cite{HLB96,KV01}:
\begin{equation}
\frac{1}{M+1} \sum_{{\ell}= 0}^M \left\| \bu(\cdot,t_{\ell}) - 
  \sum_{j=1}^r \left( \bu(\cdot,t_{\ell}),\bphi_j(\cdot) \right)_{\cH} \, \bphi_j(\cdot) 
  \right\|_{\cH}^2
= \sum_{j=r+1}^{d} \lambda_j \, .
\label{pod_error_formula}
\end{equation}
We define the ROM space as 
$
\bX^r := \text{span}\left\{\bphi_1, \ldots, \bphi_r\right\} 
$.

\begin{remark}
\label{rem:pod}
Since, the POD basis functions are linear combinations of the snapshots, 
the POD basis functions satisfy the boundary conditions in \eqref{eq:nse} and are solenoidal.  
If the FE approximations are used as snapshots, the POD basis functions belong to $\bV^h$, which yields  $\bX^r\subset \bV^h$.
\end{remark}

\subsection{The Galerkin ROM (G-ROM)}
	\label{sec:g-rom}

The ROM employs both Galerkin truncation and Galerkin projection. 
The former yields an approximation of the velocity field by a linear combination of the truncated POD basis: 
\begin{equation}
\label{eq:pod}
  {\bu}( {\bf x}, t ) \approx {\bur} ( {\bf x}, t ) 
  \equiv \sum_{j=1}^r a_j ( t ) \bphi_j ( {\bf x} ),
\end{equation}
where 
$\left\{a_{j}( t) \right\}_{j=1}^{r}$ are the sought time-varying coefficients representing the POD-Galerkin trajectories. 
Note that $r\ll N$, where $N$ denotes the number of degrees of freedom in the full order model (e.g., the FE approximation).
Replacing the velocity $\bu$ with $\bur$ in the NSE \eqref{eq:nse}, using the Galerkin method, and projecting the resulting equations onto the ROM space $\bX^r$, 
one obtains the {\it Galerkin ROM (G-ROM)} for the NSE: Find $\bur\in\bX^r$ such that 
\begin{equation}
\left(  \frac{\partial \bu_{r}}{\partial t} , \bphi \right) 
+  \nu(  \nabla \bur , \nabla \bphi ) 
+ b^*( \bur, \bur, \bphi) 
= ( {\bf f}, \bphi),  \quad
\forall \, \bphi \in \bX^r
\label{eqn:g-rom}
\end{equation}
and $\bu_r(\cdot, 0) \in \bX^r$. 
In \eqref{eqn:g-rom}, the pressure term vanishes because all POD modes are solenoidal and satisfy the appropriate boundary conditions. 
The error analysis of the spatial and temporal discretizations of the G-ROM~\eqref{eqn:g-rom} was considered in \cite{chapelle2012galerkin,iliescu2014are,kostova2015error,KV01,KV02,LCNY08,singler2014new}. 
Despite its appealing computational efficiency, the G-ROM \eqref{eqn:g-rom} has generally been limited to laminar flows.  
To overcome this restriction, we consider the Leray ROM.

\subsection{ROM Differential Filter (DF)}
	\label{sec:rom-df}

To construct the Leray ROM, we use the {\it ROM differential filter (DF)}, which is an explicit ROM spatial filter:
\begin{definition}[ROM Differential Filter]
	$\forall \bv \in \bX$, let $\obr{v}$ be the unique element of $\bXr$ such that 
	\begin{equation}
		\delta^2 \, \left( \nabla \obr{v} , \nabla \bvr \right)
		+ \left( \obr{v} , \bvr \right)
		= \left( \bv , \bvr \right)
		\quad \forall \bvr \in \bXr \, .
		\label{eqn:rom-df}
	\end{equation}
\end{definition}
The differential filter was introduced in large eddy simulation by Germano~\cite{germano1986differential,germano1986differential-b}.
In a ROM setting, the DF~\eqref{eqn:rom-df} was first used in~\cite{sabetghadam2012alpha} and later extended in~\cite{iliescu2017regularized,wells2017evolve,xie2017approximate}.

\subsection{The Model}
	\label{sec:model}

We consider the {\it Leray reduced order model (L-ROM)}~\cite{sabetghadam2012alpha,wells2017evolve}, which is a regularized ROM: 
Find $\bur \in \bX^r$ such that
\begin{equation}
\left(  \frac{\partial \bu_{r}}{\partial t} , \bphi \right) 
+ \nu (  \nabla \bu_{r} , \nabla \bphi ) 
+ b^*( \obur, \bur, \bphi) 
= (\bff , \bphi),  \quad
\forall \, \bphi \in \bX^r,
\label{eqn:l-rom}
\end{equation}
where the initial condition is given by the $L^2$ projection of $\bu^0$ on $\bX^r$: 
$
\bu_r(\cdot, 0) := \sum_{j=1}^r (\bu^0, \bphi_j)\bphi_j.
$ 

We consider the full discretization of \eqref{eqn:l-rom}: 
We use the backward Euler method with a time step $\Delta t$ for the time integration and the FE space $\PP^{m}$ with $m\geq 2$ and a mesh size $h$ for the spatial discretization. 
For $k= 0, \ldots, M$, we denote the approximation solution of \eqref{eqn:l-rom} at $t_k = k\Delta t$ to be $\bur^{k} = \bu_{h, r}(t_k)$ and the force at $t_k$ to be $\bff^{k} = \bff(t_k)$, respectively. 
Note that we have dropped the subscript $``h"$ in $\bur^{k}$ for clarity of notation.    
The discretized L-ROM reads: 
Find $\bur^{k}\in \bX^r$ such that, $\forall \, \bphi \in \bX^r,\, \forall \, k = 0, \ldots, M-1$,
\begin{eqnarray}
\left(  \frac{\bur^{k+1}-\bur^{k}}{\Delta t} , \bphi \right) 
+ \nu (  \nabla \bur^{k+1} , \nabla \bphi ) 
+ b^*( \overline{\bur^{k+1}}^r, \bur^{k+1}, \bphi) 
= ( \bff^{k+1}, \bphi),  
\label{eqn:l-rom-be}
\end{eqnarray}
where the initial condition is 
$
	{\bf u}_r^0 = \sum_{j=1}^{r} ({\bf u}^0 , \bphi_j) \, \bphi_j \, .
$

\section{Error Analysis}
	\label{sec:error-analysis}

In this section, we present the error analysis for the L-ROM discretization \eqref{eqn:l-rom-be}. 
We take the FE solutions $\bu_h(\cdot, t_i)$, $i=0, \ldots, M$ as snapshots and choose $\mathcal{H} = \bL^2$ in the POD generation. 
The error source includes three main components: the spatial FE discretization error, the temporal discretization error, and the POD truncation error. 
We derive the error estimate in three steps: 
First, we gather some necessary assumptions and preliminary results in Section \ref{sec:preliminaries}. 
Then, we prove a ROM filtering error estimate in Section~\ref{sec:rom-filtering-error-estimates}.
Finally, we prove an L-ROM error estimate in Section \ref{sec:l-rom-error-estimates}. 

\subsection{Preliminaries}
	\label{sec:preliminaries} 

\begin{definition}[Generic Constant $C$]
	In what follows, $C$ and $C_j$, where $j$ is a positive integer, will denote generic constants that do not depend on $\delta, r, h, \Delta t, m, \bphi_j, \lambda_j$, but can depend on $\nu, \bu_0, \bff, \bur^0, n, T$.
\end{definition}

\begin{definition}[ROM Laplacian]
	Let
	\begin{equation}
		\Deltar : \bX \rightarrow \bXr
		\label{eqn:pod-filtering-4}
	\end{equation}
	such that, $\forall \bv \in \bX, \Deltar \bv$ is the unique element of $\bXr$ such that 
	\begin{equation}
		\left( \Deltar \bv , \bvr \right)
		= - \left( \nabla \bv , \nabla \bvr \right)
		\quad \forall \bvr \in \bXr \, .
		\label{eqn:pod-filtering-5}
	\end{equation}
	\label{definition:pod-filtering-1}
\end{definition}

	\vspace*{-0.4cm}

We list a POD inverse estimate, which will be used in what follows.
Let $S_{r} \in \R^{r \times r}$ with $(S_r)_{i j} = (\nabla \varphi_j , \nabla \varphi_i)_{L^2}$ be the POD stiffness matrix.
Let $\| \cdot \|_2$  denote the matrix 2-norm.
\begin{lemma}[POD Inverse Estimates]
For all $\bvr \in \bX^r$, the following POD inverse estimate holds:
\begin{eqnarray}
	\| \nabla \bvr \|_{L^2}  
	&\leq& \CinvNabla \, \| \bvr \|_{L^2} 
	\label{eqn:pod-inverse-estimates-1} \, ,
\end{eqnarray}
where $\CinvNabla := \sqrt{\| S_r \|_2}$.
	\label{lemma:pod-inverse-estimates-1}
\end{lemma}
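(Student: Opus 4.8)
The plan is to prove the POD inverse estimate \eqref{eqn:pod-inverse-estimates-1} by expanding an arbitrary $\bvr \in \bX^r$ in the POD basis and translating the $L^2$ and $H^1$ seminorms into Euclidean norms of the coefficient vector, where the POD stiffness matrix $S_r$ appears naturally. Specifically, I would write $\bvr = \sum_{j=1}^r c_j \, \bphi_j$ with coefficient vector $\bc = (c_1, \ldots, c_r)^\top \in \R^r$. Because the POD basis is $L^2$-orthonormal (that is, $(\bphi_i, \bphi_j)_{L^2} = \delta_{ij}$), we immediately get $\| \bvr \|_{L^2}^2 = \bc^\top \bc = \| \bc \|_2^2$. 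For the gradient, bilinearity of the $L^2$ inner product gives
\begin{equation*}
	\| \nabla \bvr \|_{L^2}^2
	= \left( \nabla \bvr , \nabla \bvr \right)_{L^2}
	= \sum_{i=1}^r \sum_{j=1}^r c_i \, c_j \, \left( \nabla \bphi_j , \nabla \bphi_i \right)_{L^2}
	= \bc^\top S_r \, \bc \, ,
\end{equation*}
using the definition $(S_r)_{ij} = (\nabla \bphi_j, \nabla \bphi_i)_{L^2}$.

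The second step is to bound the quadratic form $\bc^\top S_r \bc$. Since $S_r$ is symmetric positive semidefinite (it is a Gram matrix of the gradients $\nabla \bphi_j$), the Rayleigh quotient bound gives $\bc^\top S_r \bc \leq \lambda_{\max}(S_r) \, \| \bc \|_2^2 = \| S_r \|_2 \, \| \bc \|_2^2$, where the last equality holds because the matrix $2$-norm of a symmetric positive semidefinite matrix equals its largest eigenvalue. Combining this with the two identities from the first step yields
\begin{equation*}
	\| \nabla \bvr \|_{L^2}^2
	= \bc^\top S_r \, \bc
	\leq \| S_r \|_2 \, \| \bc \|_2^2
	= \| S_r \|_2 \, \| \bvr \|_{L^2}^2 \, ,
\end{equation*}
and taking square roots gives exactly \eqref{eqn:pod-inverse-estimates-1} with $\CinvNabla = \sqrt{\| S_r \|_2}$.

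I do not anticipate a genuine obstacle here — this is a short, self-contained linear-algebra argument — but the one point that requires a little care is making sure the constant $\CinvNabla$ is genuinely independent of the quantities the generic-constant convention excludes. It is not: $\CinvNabla$ depends on $r$ and on the POD basis through $S_r$, which is precisely why the statement carries $\CinvNabla$ as an explicit symbol $C_{inv}^{\nabla}(r)$ rather than absorbing it into the generic $C$. I would remark that, unlike classical FE inverse estimates where the constant scales like $h^{-1}$, the POD inverse constant reflects how oscillatory the retained POD modes are, and it can in general grow with $r$; this dependence is tracked explicitly throughout the subsequent error analysis. No further regularity or approximation hypotheses are needed, and the $L^2$-orthonormality of the POD basis — guaranteed by the construction in Section~\ref{sec:pod} with $\cH = \bL^2$ — is the only structural fact used.
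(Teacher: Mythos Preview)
Your argument is correct and is exactly the standard linear-algebra proof of this inverse estimate. The paper itself does not give a proof; it simply cites Lemma~2 and Remark~2 in~\cite{KV01} (and numerical investigations in~\cite{iliescu2014are,giere2015supg}), where the same orthonormal-expansion/Rayleigh-quotient argument appears. So your proposal matches the cited source, even though the paper proper only states the result.
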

The inverse estimate~\eqref{eqn:pod-inverse-estimates-1} was proved in Lemma 2 and Remark 2 in~\cite{KV01} and was numerically investigated in Remark 3.3 in~\cite{iliescu2014are} and in Remark 3.2 in~\cite{giere2015supg}.

\begin{definition}[ROM $L^2$ Projection]
	Let
	\begin{equation}
		P_r : \bL^2 \rightarrow \bXr
		\label{eqn:rom-L2-projection-1}
	\end{equation}
	such that, $\forall \, \bv \in \bL^2, P_r(\bv)$ is the unique element of $\bXr$ such that 
	\begin{equation}
		\left( P_r(\bv) , \bvr \right)
		= \left( \bv , \bvr \right)
		\quad \forall \bvr \in \bXr \, .
		\label{eqn:rom-L2-projection-2}
	\end{equation}
\end{definition}

\begin{proposition}[$L^2$ Stability of ROM $L^2$ Projection]
	\begin{eqnarray}
		\| P_r(\bv) \|
		&\leq& \| \bv \|
		\qquad \forall \, \bv \in \bL^2 \, .
		\label{eqn:rom-L2-projection-3} 
	\end{eqnarray}
	\label{proposition:rom-L2-projection-1}
\end{proposition}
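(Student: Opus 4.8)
The plan is to test the defining relation of the ROM $L^2$ projection against itself. Since $P_r(\bv) \in \bXr$ by the very definition of $P_r$, we may choose the test function $\bvr = P_r(\bv)$ in~\eqref{eqn:rom-L2-projection-2}, which yields
\[
\| P_r(\bv) \|^2 = \left( P_r(\bv) , P_r(\bv) \right) = \left( \bv , P_r(\bv) \right).
\]
Next I would bound the right-hand side by the Cauchy--Schwarz inequality, obtaining $\| P_r(\bv) \|^2 \leq \| \bv \| \, \| P_r(\bv) \|$. If $\| P_r(\bv) \| = 0$ the asserted estimate holds trivially; otherwise divide both sides by $\| P_r(\bv) \|$ to conclude $\| P_r(\bv) \| \leq \| \bv \|$.

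I do not expect any genuine obstacle here: this is simply the standard fact that orthogonal projection onto a closed subspace of a Hilbert space is norm-nonincreasing, specialized to $\bH = \bL^2$ and the subspace $\bXr$ (which is finite-dimensional, hence closed). The only point meriting a line of care is the degenerate case $P_r(\bv) = 0$, which is dispatched immediately as above.

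An equivalent route, if one prefers to avoid the case distinction, is to use the Pythagorean identity: taking $\bvr = P_r(\bv)$ in~\eqref{eqn:rom-L2-projection-2} gives $\left( \bv - P_r(\bv) , P_r(\bv) \right) = 0$, whence
\[
\| \bv \|^2 = \| \bv - P_r(\bv) \|^2 + \| P_r(\bv) \|^2 \geq \| P_r(\bv) \|^2 ,
\]
which is precisely~\eqref{eqn:rom-L2-projection-3}. Either argument is a one-step computation from the definition of $P_r$.
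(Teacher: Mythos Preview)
Your proposal is correct and follows exactly the approach indicated in the paper: set $\bvr = P_r(\bv)$ in the defining relation~\eqref{eqn:rom-L2-projection-2} and apply the Cauchy--Schwarz inequality. The extra care you take with the degenerate case and the alternative Pythagorean argument are fine but not needed beyond what the paper sketches.
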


\begin{proof}
By choosing $\bv_r := P_r(\bv)$ in~\eqref{eqn:rom-L2-projection-2} and using the Cauchy-Schwarz inequality, we can prove~\eqref{eqn:rom-L2-projection-3}.
\end{proof}

The following error estimate was proved in Lemma 3.3 in~\cite{iliescu2014variational}:
\begin{lemma}
For any $\bu^k \in \bX$, its $L^2$ projection, $\bw_r^k = P_r(\bu^k)$, satisfies the following error estimates:
\begin{eqnarray}
	   && \hspace*{-2.0cm}
	   \frac{1}{M+1} \sum_{k=0}^{M} \left\| \bu^k- \bw_r^k  \right\|^2 
	\leq C \left( h^{2m+2} + \Delta t^2 + \sum _{j = r+1}^{d} \lambda_j \right) ,
\label{eq:error_eta} \\
	   && \hspace*{-2.0cm}
	    \frac{1}{M+1} \sum_{k=0}^{M} \left\| \nabla ( \bu^k- \bw_r^k ) \right\|^2
	\leq C \biggl( h^{2m} + \|S_r\|_2 h^{2m+2}+ 
	\nonumber \\
	&& \hspace*{4.0cm}
	(1+\|S_r\|_2 )\Delta t^2 + \sum _{j = r+1}^{d}\|\bphi_j\|_1^2\, \lambda_j  \biggr). 
\label{eq:error_geta}
\end{eqnarray}
\label{lemma_proj}
\end{lemma}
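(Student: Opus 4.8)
The plan is to reduce both estimates to a combination of (i) the finite element error, (ii) the pure POD truncation error, and (iii) the $L^2$-stability and the inverse estimate for the ROM $L^2$ projection. Since the snapshots are the finite element solutions $\bu_h^k$, I would write $\bu^k - \bw_r^k = \bu^k - P_r(\bu^k)$, insert $\bu_h^k$ and its ROM projection $P_r(\bu_h^k)$, and split
\begin{equation*}
	\bu^k - P_r(\bu^k) = \bigl( \bu^k - \bu_h^k \bigr) + \bigl( \bu_h^k - P_r(\bu_h^k) \bigr) + P_r\bigl( \bu_h^k - \bu^k \bigr) ,
\end{equation*}
using the linearity of $P_r$ in the last term, and estimate the three pieces separately in $\|\cdot\|$ for \eqref{eq:error_eta} and in $\|\nabla\cdot\|$ for \eqref{eq:error_geta}.

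The preparatory step is the POD bookkeeping. Because $\cH = \bL^2$, the modes $\{\bphi_j\}_{j=1}^d$ are $L^2$-orthonormal, each snapshot satisfies $\bu_h^k = \sum_{j=1}^d a_j^k \bphi_j$ with $a_j^k := (\bu_h^k,\bphi_j)$, and the eigenvalue problem for the snapshot correlation matrix gives the standard POD identity $\frac{1}{M+1}\sum_{k=0}^M a_j^k a_m^k = \lambda_j\,\delta_{jm}$. Since $P_r(\bu_h^k) = \sum_{j=1}^r a_j^k\bphi_j$, so that $\bu_h^k - P_r(\bu_h^k) = \sum_{j=r+1}^d a_j^k\bphi_j$, expanding $\|\bu_h^k - P_r(\bu_h^k)\|_{\mathcal X}^2$ in any fixed norm $\mathcal X$, summing over $k$, and applying the identity yields $\frac{1}{M+1}\sum_k \|\bu_h^k - P_r(\bu_h^k)\|_{\mathcal X}^2 = \sum_{j=r+1}^d \|\bphi_j\|_{\mathcal X}^2\,\lambda_j$. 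With $\mathcal X = L^2$ this is \eqref{pod_error_formula}; with $\mathcal X = H^1$ and $\|\nabla\bv\|\le\|\bv\|_1$ it gives $\frac{1}{M+1}\sum_k \|\nabla(\bu_h^k - P_r(\bu_h^k))\|^2 \le \sum_{j=r+1}^d \|\bphi_j\|_1^2\,\lambda_j$.

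It remains to bound the three terms and combine. For \eqref{eq:error_eta}: the first term is the finite element $L^2$ error $\|\bu^k - \bu_h^k\| \le C(h^{m+1}+\Delta t)$ from Assumption~\ref{lem:fem}; the second is the $L^2$ POD truncation term just bounded; and the third, by the $L^2$-stability of $P_r$ (Proposition~\ref{proposition:rom-L2-projection-1}), satisfies $\|P_r(\bu_h^k - \bu^k)\| \le \|\bu_h^k - \bu^k\| \le C(h^{m+1}+\Delta t)$, i.e. it reduces again to the finite element error. Squaring with $(a+b+c)^2\le 3(a^2+b^2+c^2)$, summing over $k$ and dividing by $M+1$ gives \eqref{eq:error_eta}. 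For \eqref{eq:error_geta} I differentiate the same splitting: the first term gives the finite element gradient error, of order $h^{2m}+\Delta t^2$ after squaring; the second gives $\sum_{j>r}\|\bphi_j\|_1^2\lambda_j$; and for the third I use that $P_r(\bu_h^k - \bu^k)\in\bX^r$, so the POD inverse estimate (Lemma~\ref{lemma:pod-inverse-estimates-1}) followed by $L^2$-stability gives $\|\nabla P_r(\bu_h^k - \bu^k)\| \le \CinvNabla\,\|P_r(\bu_h^k - \bu^k)\| \le \sqrt{\|S_r\|_2}\,\|\bu_h^k - \bu^k\| \le C\sqrt{\|S_r\|_2}\,(h^{m+1}+\Delta t)$, whose square contributes $\|S_r\|_2 h^{2m+2}+\|S_r\|_2\Delta t^2$. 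Collecting the three contributions reproduces the right-hand side of \eqref{eq:error_geta}.

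I expect the main obstacle to be the third term in the $H^1$ estimate, where the order of operations matters: differentiating $P_r(\bu_h^k-\bu^k)$ first and then invoking a finite element gradient bound would load an inverse power of $h$ onto the time-discretization term, whereas bounding its gradient by the POD inverse estimate and only then invoking the clean $L^2$ finite element estimate is what produces the $\|S_r\|_2$-weighted (rather than $h^{-2}$-weighted) terms in \eqref{eq:error_geta}. A second, smaller point is that the POD truncation inner product must be expanded in the $H^1$ inner product so that the weights $\|\bphi_j\|_1^2$ appear in the truncation sum, which is why the last term of \eqref{eq:error_geta} differs from that of \eqref{eq:error_eta}; the exact $\Delta t$ power in the gradient finite element term should be read off the $H^1$ version of the estimate in Assumption~\ref{lem:fem}.
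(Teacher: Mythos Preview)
The paper does not supply its own proof of this lemma; it simply cites Lemma~3.3 in~\cite{iliescu2014variational}. Your argument is the standard one and matches that derivation: split through the snapshot $\bu_h^k$ and its ROM projection, apply the POD eigenvalue identity $\tfrac{1}{M+1}\sum_k a_j^k a_m^k = \lambda_j\delta_{jm}$ to the middle piece (first in the $L^2$ inner product, then in the $H^1$ inner product so that the weights $\|\bphi_j\|_1^2$ appear), and for \eqref{eq:error_geta} control $\nabla P_r(\bu_h^k-\bu^k)$ by the POD inverse estimate followed by $L^2$-stability of $P_r$. Your remark about the order of operations in that last term is exactly why $\|S_r\|_2$ rather than $h^{-2}$ appears as the weight on $h^{2m+2}$ and $\Delta t^2$.

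Your closing caveat is also warranted. The literal reading of Assumption~\ref{lem:fem} gives only $\|\nabla(\bu^k-\bu_h^k)\|\le C\,h^{-1}(h^{m+1}+\Delta t)=C(h^m+h^{-1}\Delta t)$, which squares to $h^{2m}+h^{-2}\Delta t^2$ rather than the $h^{2m}+\Delta t^2$ appearing in \eqref{eq:error_geta}. The stated bound therefore requires the slightly sharper gradient estimate $\|\nabla(\bu-\bu_h)\|\le C(h^m+\Delta t)$, which is the form assumed in the original derivation; with that reading your proof goes through cleanly.
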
 

We assume the following estimates, which were also assumed in~\cite{iliescu2014are}:
\begin{assumption} 
For any $\bu^k \in \bX$, its $L^2$ projection, $\bw_r^k = P_r(\bu^k)$, satisfies the following error estimates:
\begin{eqnarray}
&& \hspace*{-0.5cm}
	    \left\|  \bu^k - \bw_r^k \right\|  \leq
C \, \left(
h^{m+1}
+ \Delta t
+\sqrt{\sum\limits_{j=r+1}^d \lambda_j}\,
\right) \, , 
\label{eq:error_eta_no_sum} \\
&& \hspace*{-0.5cm}
	     \left\|  \nabla \left( \bu^k - \bw_r^k \right) \right\|  \leq
C \, \biggl(
h^{m}   
+ \sqrt{\|S_r\|_2}h^{m+1}
+ \sqrt{1+\|S_r\|_2}\Delta t
+ \sqrt{\sum _{j = r+1}^{d} \|\bphi_j\|_1^2\, \lambda_j}\,
\biggr) \, . 
\nonumber \\
\label{eq:error_geta_no_sum}
\end{eqnarray}
\label{assumption_poderror}
\end{assumption}

\subsection{ROM Filtering Error Estimates}
	\label{sec:rom-filtering-error-estimates}
	
In this section, we present theoretical results for the DF~\eqref{eqn:rom-df}, which was the essential tool that we used in developing the Leray ROM~\eqref{eqn:l-rom}.
The main result in this section is the estimate for the ROM filtering error in Lemma~\ref{lemma:rom-filtering-error-estimates}.
To our knowledge, this is the first estimate for the ROM filtering error.
This estimate is an extension of the FE filtering error estimates proved in~\cite{dunca2015vreman,dunca2013mathematical,layton2008numerical}.
This ROM filtering error estimate is important for the L-ROM error analysis in Section~\ref{sec:l-rom-error-estimates}, since we use it to treat the nonlinear term in~\eqref{eq:error-lhs-56-4}. 

\begin{lemma}[ROM Filtering Error Estimates]
	For $\bu^{k} \in \bX$ and $\Delta \bu^{k} \in \bL^2$,
	\begin{eqnarray}
		&& \hspace*{-0.5cm} 
		\delta^2 \, \| \nabla (\bu^{k} - \overline{\bu^{k}}^r) \|^2
		+ \| \bu^{k} - \overline{\bu^{k}}^r \|^2
		\nonumber \\
		&& \hspace*{-0.5cm} 
		 \leq 
		\, C \,  
		\left(
			h^{2 m +2} + \Delta t^2 + \sum_{j=r+1}^{d} \lambda_j	
		\right)
		+ C \, \delta^4 \, \| \Delta \bu^{k} \|^2 
		\nonumber \\
		&&  \hspace*{-0.5cm} 
		+ \, C \, \delta^2 \, 
		\biggl( 
			h^{2 m}   
			+ \|S_r\|_2 \, h^{2 m+2}
			+ (1+\|S_r\|_2) \, \Delta t^2
			+\sum _{j = r+1}^{d} \|\bphi_j\|_1^2\, \lambda_j \,
		\biggr) \, .
		\label{eqn:pod-filtering-7}
	\end{eqnarray}
	\label{lemma:rom-filtering-error-estimates}
\end{lemma}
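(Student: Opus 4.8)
The plan is to adapt the classical differential-filter error argument (cf.\ the finite element version in~\cite{layton2008numerical,dunca2013mathematical}) to the ROM setting. First I would split the filtering error through the ROM $L^2$ projection $\bw_r^k := P_r(\bu^k)$ of~\eqref{eqn:rom-L2-projection-2}: set $\bfeta^k := \bu^k - \bw_r^k$ (the pure $L^2$ projection error, already controlled in both norms by Assumption~\ref{assumption_poderror}) and $\bphir^k := \bw_r^k - \overline{\bu^k}^r \in \bXr$ (the genuinely new quantity), so that $\bu^k - \overline{\bu^k}^r = \bfeta^k + \bphir^k$. It then suffices to bound $\|\bphir^k\|^2 + \delta^2 \|\nabla \bphir^k\|^2$ and combine with the triangle inequality.

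To estimate $\bphir^k$ I would derive its error equation. Testing the DF identity~\eqref{eqn:rom-df} for $\bv = \bu^k$ against $\bvr = \bphir^k \in \bXr$, and using the defining property of $P_r$, namely $(\bw_r^k,\bphir^k) = (\bu^k,\bphir^k)$, to replace the right-hand side, one is left with $\delta^2 (\nabla \overline{\bu^k}^r, \nabla \bphir^k) = (\bw_r^k - \overline{\bu^k}^r, \bphir^k) = \|\bphir^k\|^2$. Writing $\nabla \overline{\bu^k}^r = \nabla \bu^k - \nabla \bfeta^k - \nabla \bphir^k$ and integrating by parts in $\delta^2(\nabla \bu^k, \nabla \bphir^k)$ — here the hypothesis $\Delta \bu^k \in \bL^2$ is used, the boundary term vanishing because $\bphir^k \in \bXr \subset \bX = \bH^1_0(\Omega)$ — produces
\[
  \|\bphir^k\|^2 + \delta^2 \|\nabla \bphir^k\|^2 = - \delta^2 \left( \Delta \bu^k, \bphir^k \right) - \delta^2 \left( \nabla \bfeta^k, \nabla \bphir^k \right) .
\]
Bounding the right-hand side with the Cauchy--Schwarz and Young inequalities (absorbing the resulting multiples of $\|\bphir^k\|^2$ and $\delta^2\|\nabla\bphir^k\|^2$ on the left) gives the key estimate $\|\bphir^k\|^2 + \delta^2 \|\nabla \bphir^k\|^2 \le \delta^4 \|\Delta \bu^k\|^2 + \delta^2 \|\nabla \bfeta^k\|^2$.

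Finally, combining via the triangle inequality,
\[
  \|\bu^k - \overline{\bu^k}^r\|^2 + \delta^2 \|\nabla(\bu^k - \overline{\bu^k}^r)\|^2 \le 2\|\bfeta^k\|^2 + 4\delta^2 \|\nabla \bfeta^k\|^2 + 2\delta^4 \|\Delta \bu^k\|^2 ,
\]
so it remains only to substitute the squared $L^2$ projection error bounds~\eqref{eq:error_eta_no_sum}--\eqref{eq:error_geta_no_sum} from Assumption~\ref{assumption_poderror} for $\|\bfeta^k\|$ and $\|\nabla \bfeta^k\|$, which reproduces precisely the three groups of terms on the right-hand side of~\eqref{eqn:pod-filtering-7}. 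I expect the main obstacle to be setting up the error equation for $\bphir^k$ correctly: recognizing that the defining property of $P_r$ annihilates the data term $(\bu^k,\bphir^k)$, and arranging the identity $\nabla\overline{\bu^k}^r = \nabla\bu^k - \nabla\bfeta^k - \nabla\bphir^k$ so that integration by parts — and hence the regularity assumption $\Delta\bu^k \in \bL^2$ — can be brought to bear; everything afterward is routine Cauchy--Schwarz/Young bookkeeping plus a direct appeal to Assumption~\ref{assumption_poderror}.
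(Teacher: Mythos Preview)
Your argument is correct and follows essentially the same route as the paper's proof: derive an error equation for the ROM component via the DF identity~\eqref{eqn:rom-df} and integration by parts of $(\nabla\bu^k,\nabla\bvr)$, test with that component, apply Cauchy--Schwarz/Young, then combine with the triangle inequality and Assumption~\ref{assumption_poderror}. The only (minor) difference is that the paper keeps $\bwr\in\bXr$ arbitrary and passes to an infimum before specializing to $P_r(\bu^k)$, whereas you fix $\bw_r^k=P_r(\bu^k)$ from the outset and exploit the $L^2$-orthogonality $(\bfeta^k,\bphir^k)=0$ to drop one term in the error equation --- a slight streamlining, not a different approach.
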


\begin{proof}
Using the definition of the DF~\eqref{eqn:rom-df}, we have
\begin{equation}
	\delta^2 \, \left( \nabla \overline{\bu^{k}}^r , \nabla \bvr \right)
	+ \left( \overline{\bu^{k}}^r , \bvr \right)
	= \left( \bu^k , \bvr \right)
	\quad \forall \bvr \in \bXr \, .
	\label{eqn:pod-filtering-9}
\end{equation}
Since $\Delta \bu^k \in \bL^2$ (by hypothesis), we also have
\begin{equation}
	\delta^2 \, \left( \nabla \bu^k , \nabla \bvr \right)
	+ \left( \bu^k , \bvr \right)
	= - \delta^2 \, \left( \Delta \bu^k , \bvr \right)
	+ \left( \bu^k , \bvr \right)
	\quad \forall \bvr \in \bXr \, .
	\label{eqn:pod-filtering-10}
\end{equation}
Subtracting~\eqref{eqn:pod-filtering-9} from \eqref{eqn:pod-filtering-10}, we get
\begin{equation}
	\hspace*{-0.3cm} 
	\delta^2 \, \left( \nabla ( \bu^k - \overline{\bu^{k}}^r ) , \nabla \bvr \right)
	+ \left( \bu^k - \overline{\bu^{k}}^r , \bvr \right)
	= - \delta^2 \, \left( \Delta \bu^k , \bvr \right)
	\quad \forall \bvr \in \bXr \, .
	\label{eqn:pod-filtering-11}
\end{equation}
We decompose the error $\be := \bu^k - \overline{\bu^{k}}^r$ as follows:
\begin{equation}
	\be
	= \lp \bu^k - \bwr \rp
	- \lp \overline{\bu^{k}}^r - \bwr \rp
	:= \bfeta - \bPhir \, ,
	\label{eqn:pod-filtering-12}
\end{equation}
where $\bwr \in \bXr$ is arbitrary.
Using~\eqref{eqn:pod-filtering-12} in~\eqref{eqn:pod-filtering-11}, we get: $\forall \bvr \in \bXr$
\begin{equation}
	\hspace*{-0.1cm} 
	\delta^2 \, \left( \nabla \bPhir , \nabla \bvr \right)
	+ \left( \bPhir , \bvr \right)
	=  \delta^2 \, \left( \nabla \bfeta , \nabla \bvr \right)
	+ \left( \bfeta , \bvr \right)	
	+ \delta^2 \, \left( \Delta \bu^k , \bvr \right) \, .
	\label{eqn:pod-filtering-13}
\end{equation}

To prove~\eqref{eqn:pod-filtering-7}, we let $\bvr := \bPhir \in \bXr$ in~\eqref{eqn:pod-filtering-13} and then use the Cauchy-Schwarz and Young inequalities:
\begin{eqnarray}
	&& \hspace*{-1.0cm} 
	\delta^2 \, \left \| \nabla \bPhir \right \|^2
	+ \left \| \bPhir \right \|^2
	= \delta^2 \, \left( \nabla \bfeta , \nabla \bPhir \right)
	+ \left( \bfeta , \bPhir \right)	
	+ \delta^2 \, \left( \Delta \bu^k , \bPhir \right)
	\nonumber \\
	&& \hspace*{-1.5cm} 
	\leq \,  \delta^2 \, \| \nabla \bfeta \| \, \| \nabla \bPhir \|
	+ \| \bfeta \| \, \| \bPhir \|
	+ \delta^2 \, \| \Delta \bu^k \| \, \| \bPhir \|
	\nonumber \\
	&& \hspace*{-1.5cm} 
	\leq \, \delta^2 \left( \frac{\| \nabla \bfeta \|^2}{2} + \frac{\| \nabla \bPhir \|^2}{2}\right)
	+ \left( \|  \bfeta \|^2 + \frac{\| \bPhir \|^2}{4}\right)
	+ \left( \delta^4 \, \| \Delta \bu^k \|^2 + \frac{\| \bPhir \|^2}{4}\right) \, .
	\label{eqn:pod-filtering-14}
\end{eqnarray}
Rearranging~\eqref{eqn:pod-filtering-14} yields
\begin{eqnarray}
	\frac{\delta^2}{2} \, \| \nabla \bPhir \|^2
	+ \frac{1}{2} \, \| \bPhir \|^2
	\leq \frac{\delta^2}{2} \, \| \nabla \bfeta \|^2  
	+ \| \bfeta \|^2 
	+ \delta^4 \, \| \Delta \bu^k \|^2 \, .
	\label{eqn:pod-filtering-15}
\end{eqnarray}
Using~\eqref{eqn:pod-filtering-15} and the triangle inequality yields
\begin{eqnarray}
	&& \delta^2 \, \| \nabla (\bu^{k} - \overline{\bu^{k}}^r) \|^2
	+ \| \bu^{k} - \overline{\bu^{k}}^r \|^2
	\nonumber \\
	&\leq& C  \inf_{\bwr \in \bXr}   
	\left( 
		\delta^2 \, \| \nabla (\bu^{k} - \bwr) \|^2
		+ \| \bu^{k} - \bwr \|^2		
	\right)
	+ C \, \delta^4 \, \| \Delta \bu^{k} \|^2 \, .
	\label{eqn:pod-filtering-15b}
\end{eqnarray}
Using Assumption~\ref{assumption_poderror}, we get
\begin{eqnarray}
	&&
	\hspace*{-1.5cm}
	\inf_{\bwr \in \bXr} \| \bu^k - \bwr \|^2		
	\leq \| \bu^k - P_r(\bu^k) \|^2
	\leq C \, \left( h^{2 m +2} + \Delta t^2 + \sum_{j=r+1}^{d} \lambda_j \right) .
	\label{eqn:pod-filtering-15c} \\
	&& 
	\hspace*{-1.5cm}
	\inf_{\bwr \in \bXr} \| \nabla (\bu^k - \bwr) \|^2		
	\leq \| \nabla (\bu^k - P_r(\bu^k)) \|^2
	\nonumber \\
	&\leq& C \, \left(
			h^{2 m}   
			+ \|S_r\|_2 \, h^{2 m+2}
			+ (1+\|S_r\|_2) \, \Delta t^2
			+\sum _{j = r+1}^{d} \|\bphi_j\|_1^2\, \lambda_j \,
		\right) .
	\label{eqn:pod-filtering-15d}
\end{eqnarray}
Plugging~\eqref{eqn:pod-filtering-15c}-\eqref{eqn:pod-filtering-15d} into~\eqref{eqn:pod-filtering-15b} proves~\eqref{eqn:pod-filtering-7}. \\

\end{proof}

\begin{remark}
	Lemma~\ref{lemma:rom-filtering-error-estimates} extends Lemma 2.12 in~\cite{layton2008numerical} from the FE setting to the ROM setting.
	We could have extended Lemma 2.4 in~\cite{dunca2013mathematical} instead of Lemma 2.12 in~\cite{layton2008numerical} since the former yields better $\delta$ scalings of the $H^1$ seminorm of the filtering error.
	We emphasize, however, that the proof of Lemma 2.4 in~\cite{dunca2013mathematical} uses the $H^1$ stability of the $L^2$ projection~\cite{bank1981optimal,ern2004theory}.
	To our knowledge, the $H^1$ stability of the $L^2$ projection has not been yet proven in a ROM setting.
	Thus, we decided to extend to the ROM setting Lemma 2.12 in~\cite{layton2008numerical}, which does not rely on the $H^1$ stability of the ROM $L^2$ projection.
	\label{remark:rom-filtering-error-estimates}
\end{remark}

In the following lemma, we prove the stability of the ROM filtered variables, which will be used to prove Theorem~\ref{theorem_error}.
This lemma extends Lemma 2.11 in~\cite{layton2008numerical} (see also Lemma 2.3 in~\cite{dunca2013mathematical}) from the FE case to the ROM case.

\begin{lemma}[Stability of ROM Filtered Variables]
For $\bv \in \bX$, we have
	\begin{eqnarray}
		\| \obr{v} \|
		&\leq& \| \bv \| 
		\label{eqn:pod-filtering-6b} \\
		\| \nabla \obr{v} \|
		&\leq& \sqrt{\| S_r \|_2} \ \| \bv \| \, .
		\label{eqn:pod-filtering-6c}
	\end{eqnarray}	
For $\bv \in \bXr$, we have
	\begin{eqnarray}
		\| \nabla \obr{v} \|
		&\leq& \| \nabla \bv \| \, .
		\label{eqn:pod-filtering-6d}
	\end{eqnarray}	
	\label{lemma:pod-filtering-0}
\end{lemma}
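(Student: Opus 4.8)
The plan is to test the defining variational equation of the DF~\eqref{eqn:rom-df} with a well-chosen $\bvr$ in each of the three cases, and then use the Cauchy--Schwarz inequality together with Lemma~\ref{lemma:pod-inverse-estimates-1}. This is the exact ROM analogue of the FE argument in Lemma 2.11 of~\cite{layton2008numerical}.

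First, for~\eqref{eqn:pod-filtering-6b}: I would take $\bvr := \obr{v} \in \bXr$ in~\eqref{eqn:rom-df}, obtaining
$\delta^2 \| \nabla \obr{v} \|^2 + \| \obr{v} \|^2 = ( \bv , \obr{v} ) \leq \| \bv \| \, \| \obr{v} \|$.
Dropping the nonnegative term $\delta^2 \| \nabla \obr{v} \|^2$ and dividing by $\| \obr{v} \|$ (trivial if $\obr{v} = 0$) yields $\| \obr{v} \| \leq \| \bv \|$. Note this is essentially Proposition~\ref{proposition:rom-L2-projection-1} with a damping term; I could even cite that structure.

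Second, for~\eqref{eqn:pod-filtering-6c}: again with $\bvr := \obr{v}$ in~\eqref{eqn:rom-df} I get $\delta^2 \| \nabla \obr{v} \|^2 + \| \obr{v} \|^2 = ( \bv , \obr{v} )$, so in particular $\| \obr{v} \|^2 \leq ( \bv , \obr{v} ) \leq \| \bv \| \, \| \obr{v} \|$, i.e. $\| \obr{v} \| \leq \| \bv \|$ as above. Then, since $\obr{v} \in \bXr$, the POD inverse estimate~\eqref{eqn:pod-inverse-estimates-1} gives $\| \nabla \obr{v} \| \leq \sqrt{\| S_r \|_2} \, \| \obr{v} \| \leq \sqrt{\| S_r \|_2} \, \| \bv \|$, which is~\eqref{eqn:pod-filtering-6c}. (Alternatively one can keep the $\delta^2 \| \nabla \obr{v}\|^2$ term and combine with the inverse estimate to get a sharper constant, but the stated bound follows immediately from the $L^2$ bound and the inverse estimate.)

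Third, for~\eqref{eqn:pod-filtering-6d}, where now $\bv \in \bXr$: here I would instead test~\eqref{eqn:rom-df} with $\bvr := \obr{v}$ but also use that $\bv$ itself is an admissible test function. Taking $\bvr := \obr{v}$ gives $\delta^2 \| \nabla \obr{v} \|^2 + \| \obr{v} \|^2 = (\bv, \obr{v})$. I would rewrite the right-hand side using~\eqref{eqn:rom-df} with the roles arranged so that the $\delta^2 (\nabla \bv, \nabla \obr{v})$ term appears — concretely, since $\bv \in \bXr \subset \bX$, equation~\eqref{eqn:rom-df} applied to the function $\bv$ and tested against $\obr{v}$ reads $\delta^2 (\nabla \obr{v}, \nabla \obr{v}) + (\obr{v},\obr{v}) = (\bv, \obr{v})$, which is the same identity; to exploit the extra regularity of $\bv$ I instead test~\eqref{eqn:rom-df} (with input $\bv$) against $\bvr := \bv$, giving $\delta^2 (\nabla \obr{v}, \nabla \bv) + (\obr{v}, \bv) = \| \bv \|^2$, and test it against $\bvr := \obr{v}$ to get $\delta^2 \| \nabla \obr{v} \|^2 + \| \obr{v} \|^2 = (\bv, \obr{v})$. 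Combining these with Cauchy--Schwarz on $\delta^2(\nabla \obr{v}, \nabla \bv) \leq \delta^2 \| \nabla \obr{v} \| \, \| \nabla \bv \|$ and $(\obr{v},\bv) \le \|\obr{v}\|\,\|\bv\|$, and dropping nonnegative terms, yields $\delta^2 \| \nabla \obr{v} \|^2 \leq \delta^2 \| \nabla \obr{v} \| \, \| \nabla \bv \|$, hence $\| \nabla \obr{v} \| \leq \| \nabla \bv \|$.

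The routine obstacle, and the only place requiring care, is the third estimate: one must genuinely use $\bv \in \bXr$ (so that $\bv$ is itself a legal test function in the variational problem, which is what lets the $\| \nabla \bv \|$ — rather than $\| \bv \|$ or $\delta^{-1}$ — appear on the right); for general $\bv \in \bX$ this bound is false. Everything else is a direct Cauchy--Schwarz/Young manipulation plus an invocation of Lemma~\ref{lemma:pod-inverse-estimates-1}, with no hidden difficulty.
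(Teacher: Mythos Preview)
Your arguments for~\eqref{eqn:pod-filtering-6b} and~\eqref{eqn:pod-filtering-6c} are correct and coincide exactly with the paper's proof: test~\eqref{eqn:rom-df} with $\bvr=\obr{v}$, apply Cauchy--Schwarz, and then invoke the POD inverse estimate~\eqref{eqn:pod-inverse-estimates-1}.

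For~\eqref{eqn:pod-filtering-6d} your route is genuinely different from the paper's, and it works, but your write-up of the ``combining'' step is imprecise. What actually makes your two identities yield the result is \emph{subtracting} them: testing~\eqref{eqn:rom-df} with $\bvr=\bv$ and with $\bvr=\obr{v}$ and taking the difference gives
\[
\delta^2\bigl[(\nabla\obr{v},\nabla\bv)-\|\nabla\obr{v}\|^2\bigr]
=\|\bv\|^2-2(\bv,\obr{v})+\|\obr{v}\|^2
=\|\bv-\obr{v}\|^2\ge 0,
\]
hence $\|\nabla\obr{v}\|^2\le(\nabla\obr{v},\nabla\bv)\le\|\nabla\obr{v}\|\,\|\nabla\bv\|$. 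The Cauchy--Schwarz bound on $(\obr{v},\bv)$ that you mention is not used (and would point the wrong way); the ``nonnegative term'' being dropped is $\|\bv-\obr{v}\|^2$.

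By contrast, the paper proves~\eqref{eqn:pod-filtering-6d} by testing~\eqref{eqn:rom-df} with the single choice $\bvr=\Deltar\obr{v}$ (Definition~\ref{definition:pod-filtering-1}), which after unwinding the definition of $\Deltar$ yields $\delta^2\|\Deltar\obr{v}\|^2+\|\nabla\obr{v}\|^2=(\nabla\bv,\nabla\obr{v})$ directly. Both arguments hinge on $\bv\in\bXr$ (in yours, so that $\bv$ is an admissible test function; in the paper's, so that $(\bv,\Deltar\obr{v})=-(\nabla\bv,\nabla\obr{v})$). Your approach is slightly more elementary in that it never introduces the ROM Laplacian; the paper's is a one-line computation once $\Deltar$ is available.
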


\begin{proof}
To prove~\eqref{eqn:pod-filtering-6b}, we let $\bvr=\obr{v}$ in~\eqref{eqn:rom-df}:
\begin{equation}
	\delta^2 \, \| \nabla \obr{v} \|^2
	+ \| \obr{v} \|^2
	= \left( \bv , \obr{v} \right) \, .
	\label{eqn:pod-filtering-6e}
\end{equation}
Applying the Cauchy-Schwarz inequality to the RHS of~\eqref{eqn:pod-filtering-6e}, we get~\eqref{eqn:pod-filtering-6b}. 

To prove~\eqref{eqn:pod-filtering-6c}, we use the POD inverse estimate~\eqref{eqn:pod-inverse-estimates-1} and~\eqref{eqn:pod-filtering-6b}:
\begin{eqnarray}
	\| \nabla \obr{v} \|
	\stackrel{\eqref{eqn:pod-inverse-estimates-1}}{\leq} 
	\sqrt{\| S_r \|_2} \ \| \obr{v} \| 
	\stackrel{\eqref{eqn:pod-filtering-6b}}{\leq} 
	\sqrt{\| S_r \|_2} \ \| \bv \| \, .
	\label{eqn:pod-filtering-6f}
\end{eqnarray}	

Finally, to prove~\eqref{eqn:pod-filtering-6d}, we let $\bvr=\Deltar \obr{v}$ in~\eqref{eqn:rom-df}:
\begin{equation}
	\delta^2 \, \biggl( \nabla \obr{v} , \nabla (\Deltar \obr{v}) \biggr)
	+ \biggl( \obr{v}  ,  \Deltar \obr{v} \biggr)
	= \biggl( \bv  ,  \Deltar \obr{v} \biggr) \, .
	\label{eqn:pod-filtering-6g}
\end{equation}
Using the definition of the ROM Laplacian~\eqref{eqn:pod-filtering-5} in~\eqref{eqn:pod-filtering-6g}, we get
\begin{equation}
	- \delta^2 \, \biggl( \Deltar \obr{v} , \Deltar \obr{v} \biggr)
	- \biggl( \nabla \obr{v}  ,  \nabla \obr{v} \biggr)
	= - \biggl( \nabla \bv  ,  \nabla \obr{v} \biggr) \, .
	\label{eqn:pod-filtering-6h}
\end{equation}
Applying the Cauchy-Schwarz inequality to the RHS of~\eqref{eqn:pod-filtering-6h}, we get~\eqref{eqn:pod-filtering-6d}. 

\end{proof}

\subsection{Leray ROM Error Estimates}
	\label{sec:l-rom-error-estimates}

In this section, we prove a stability estimate (Lemma~\ref{lemma:stability}) and an error estimate (Theorem~\ref{theorem_error}) for the L-ROM \eqref{eqn:l-rom-be}.

\begin{lemma}
The solution of \eqref{eqn:l-rom-be} satisfies the following bound: 
\begin{equation}
\left\|  \bur^{\widetilde{M}} \right\|^2  
+ \Delta t \sum\limits_{k=0}^{\widetilde{M}-1} \left\| \nabla \bur^{k+1} \right\|^2
\leq 
C 
\qquad \forall \,1 \leq \widetilde{M} \leq M-1 \, .
\label{stability:c}
\end{equation}
\label{lemma:stability}
\end{lemma}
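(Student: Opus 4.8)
The plan is to run a standard discrete energy argument, exploiting the fact that the Leray regularization only changes the first argument of the skew-symmetric trilinear form. First I would set $\bphi := \bur^{k+1}$ in~\eqref{eqn:l-rom-be}. Since $\overline{\bur^{k+1}}^r \in \bXr \subset \bX$ (Remark~\ref{rem:pod}) and $\bur^{k+1} \in \bXr \subset \bX$, the skew-symmetry~\eqref{eq:b_skewsymm} of Lemma~\ref{lem:b*} gives $b^*(\overline{\bur^{k+1}}^r, \bur^{k+1}, \bur^{k+1}) = 0$, so the nonlinear term drops out. This is the structural point that makes the estimate work: even after ROM filtering, the convective term vanishes when tested against the solution, exactly as for the G-ROM.

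For the remaining terms I would use the polarization identity $(\ba - \bb, \ba) = \tfrac12 ( \|\ba\|^2 - \|\bb\|^2 + \|\ba - \bb\|^2 )$ on the discrete time-derivative term, keep $\nu \, \Delta t \, \|\nabla \bur^{k+1}\|^2$ from the viscous term, and bound the forcing term by Cauchy--Schwarz, the Poincar\'e inequality, and Young's inequality, so that $\Delta t \, (\bff^{k+1}, \bur^{k+1}) \leq \tfrac{\nu \, \Delta t}{2} \|\nabla \bur^{k+1}\|^2 + C \, \Delta t \, \|\bff^{k+1}\|^2$, absorbing the first term on the right into the viscous contribution. Discarding the nonnegative term $\|\bur^{k+1} - \bur^{k}\|^2$ then yields
\[
	\| \bur^{k+1} \|^2 - \| \bur^{k} \|^2
	+ \nu \, \Delta t \, \| \nabla \bur^{k+1} \|^2
	\leq C \, \Delta t \, \| \bff^{k+1} \|^2 .
\]

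Summing over $k = 0, \ldots, \widetilde{M}-1$ and telescoping gives
\[
	\| \bur^{\widetilde{M}} \|^2
	+ \nu \, \Delta t \sum_{k=0}^{\widetilde{M}-1} \| \nabla \bur^{k+1} \|^2
	\leq \| \bur^{0} \|^2
	+ C \, \Delta t \sum_{k=0}^{\widetilde{M}-1} \| \bff^{k+1} \|^2 .
\]
Finally I would bound the right-hand side by a constant independent of $\delta, r, h, \Delta t, m$: the initial term satisfies $\|\bur^{0}\| = \|P_r(\bu^0)\| \leq \|\bu^0\|$ by Proposition~\ref{proposition:rom-L2-projection-1}, and the forcing sum is a Riemann sum controlled by $\|\bff\|_{L^2(0,T;\bL^2(\Omega))}^2$, which is finite by Assumption~\ref{assumption_regularity}. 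Absorbing $\nu$ and the $\bu^0$-, $\bff$-dependent factors into the generic constant $C$ gives~\eqref{stability:c}. I do not expect a genuine obstacle here; the one point that needs care is the skew-symmetry observation in the first paragraph, which is precisely the feature that distinguishes the Leray ROM (filtering in the first slot of $b^*$) from regularizations that would spoil the discrete energy balance.
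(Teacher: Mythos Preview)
Your argument is correct and follows essentially the same route as the paper: test with $\bur^{k+1}$, use skew-symmetry~\eqref{eq:b_skewsymm} to eliminate the nonlinear term, absorb half of the viscous term via Young's inequality on the forcing, and sum. The only cosmetic difference is that the paper bounds the forcing via the duality pairing $\|\bff^{k+1}\|_{-1}\,\|\nabla \bur^{k+1}\|$ rather than your Poincar\'e-based $C\|\bff^{k+1}\|\,\|\nabla \bur^{k+1}\|$, and it does not spell out the bounds on $\|\bur^{0}\|$ and the forcing sum that you do; neither difference is substantive.
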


\begin{proof}
Choosing $\bphi:=\bur^{k+1}$ in \eqref{eqn:l-rom-be} and noting that $b^*(\overline{\bur^{k+1}}^r, \bur^{k+1} , \bur^{k+1}) = 0$ by \eqref{eq:b_skewsymm}, we obtain
\begin{equation}
\left( {\bur^{k+1}-\bur^{k}} , \bur^{k+1} \right)
+ {\nu \Delta t} \left( \nabla \bur^{k+1} , \nabla \bur^{k+1} \right)
= {\Delta t} \left( {\bff}^{k+1}, \bur^{k+1} \right).  
\label{eq:egy1}
\end{equation}
Using the Cauchy-Schwarz and Young inequalities yields
\begin{eqnarray}
\frac{1}{2} \left\|  \bur^{k+1} \right\| ^2 - \frac{1}{2} \left\| \bur^{k} \right\|^2 
+ \nu \Delta t \left\| \nabla \bur^{k+1} \right\|^2
\leq {\Delta t} \left( {\bff}^{k+1}, \bur^{k+1} \right).  
\label{eq:th:stability_1}
\end{eqnarray}
Applying the Cauchy-Schwarz and Young inequalities in \eqref{eq:th:stability_1}, 
we get 
\begin{eqnarray}
\hspace*{-0.5cm}
\frac{1}{2} \left\|  \bur^{k+1} \right\| ^2 - \frac{1}{2} \left\| \bur^{k} \right\|^2 
+ \nu \Delta t \left\| \nabla \bur^{k+1} \right\|^2
\leq \frac{\Delta t}{2 \nu} \left\| {\bff}^{k+1} \right\|_{-1}^2 + \frac{\nu \Delta t}{2} \left\| \nabla \bur^{k+1} \right\|^2.
\label{stability:1} 
\end{eqnarray}
The stability estimate \eqref{stability:c} follows by summing \eqref{stability:1} from 0 to $\widetilde{M}-1$. \end{proof}

\begin{theorem}
Under the regularity assumption of the exact solution (Assumption~ \ref{assumption_regularity} and Assumption~\ref{assumption:continuous-stability}), the assumption on the FE approximation (Assumption \ref{lem:fem}), and the assumption on the ROM projection error (Assumption \ref{assumption_poderror}), the solution of the L-ROM \eqref{eqn:l-rom-be} satisfies the following error estimate: There exists $\Delta t^*>0$ such that the inequality
\begin{eqnarray}
	&& \left\| {\bf u}^M - {\bf u}_r^M \right\|^2
	+ \Delta t \, \sum \limits_{k = 0}^{M-1} \left\| \nabla \left( {\bf u}^{k+1} - {\bf u}_{r}^{k+1}\right) \right\|^2
	\nonumber \\
	&\leq& 
	C \, \cF\biggl( \delta, h, \Delta t, \| S_r \|_2, \{ \lambda_j \}_{j=r+1}^{d}, \{ \|\bphi_j\|_1 \}_{j=r+1}^{d}\biggr)
	\label{eq:theorem_error_1}
\end{eqnarray}
holds for all $\Delta t< \Delta t^*$, where 
\begin{eqnarray}
	&& \hspace*{-0.8cm}
	\cF\biggl( \delta, h, \Delta t, \| S_r \|_2, \{ \lambda_j \}_{j=r+1}^{d}, \{ \|\bphi_j\|_1 \}_{j=r+1}^{d}\biggr)
	\nonumber \\
	&& \hspace*{-0.8cm} 
	= \left(
h^{2m}   
+ \|S_r\|_2 h^{2m+2}
+ \left( 1+\|S_r\|_2 \right) \Delta t^2
+ \sum\limits_{j=r+1}^d \|\bphi_j\|_1^2\, \lambda_j\,
\right)
	\nonumber \\
&& \hspace*{-0.8cm} 
	+ \, \| S_r \|_2^{1/2}
	\left(
	h^{2m+2}   
	+ \Delta t^2
	+ \sum\limits_{j=r+1}^d \lambda_j  \ 
\right)
		+ \frac{1}{\delta} \, 
		\biggl(
			h^{2 m +2} + \Delta t^2 + \sum_{j=r+1}^{d} \lambda_j	
		\biggr)
	\nonumber \\
	&& \hspace*{-0.8cm} 
		+ \, \delta \, 
		\biggl( 
			h^{2 m}   
			+ \|S_r\|_2 \, h^{2 m+2}
			+ (1+\|S_r\|_2) \, \Delta t^2
			+\sum _{j = r+1}^{d} \|\bphi_j\|_1^2\, \lambda_j \,
		\biggr)
		+ \, \delta^3 .
	\label{eqn:theorem-error-1b}
\end{eqnarray}
\label{theorem_error}
\end{theorem}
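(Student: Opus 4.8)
The plan is to isolate the ``genuine'' ROM error by projecting $\bu^{k}-\bur^{k}$ onto $\bXr$, to derive a difference equation for the projected part, to control the nonlinear terms by combining skew--symmetry~\eqref{eq:b_skewsymm} and the sharp trilinear bound~\eqref{eq:b_bound_1} with the ROM filtering error estimate~\eqref{eqn:pod-filtering-7} and the filter stability estimates~\eqref{eqn:pod-filtering-6b}--\eqref{eqn:pod-filtering-6d}, and to close with a discrete Gronwall argument. First I would set $\bw_r^{k}:=P_r(\bu^{k})$ and split $\bu^{k}-\bur^{k}=(\bu^{k}-\bw_r^{k})-(\bur^{k}-\bw_r^{k})=:\bfeta^{k}-\bPhir^{k}$, with $\bPhir^{k}\in\bXr$ the ROM component of the error. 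Since the L-ROM initial condition is the $L^{2}$ projection of $\bu^{0}$, one has $\bPhir^{0}=0$; the projection error $\bfeta^{k}$ is controlled by Assumption~\ref{assumption_poderror}, so only $\bPhir^{k}$ needs to be estimated.

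Next, evaluating the weak NSE~\eqref{eq:nse_weak} at $t_{k+1}$, testing with $\bphi\in\bXr\subset\bV^{h}$, replacing $\bu_{t}(t_{k+1})$ by $(\bu^{k+1}-\bu^{k})/\Delta t$, and subtracting the discrete L-ROM~\eqref{eqn:l-rom-be} yields, after using $(\bfeta^{k+1}-\bfeta^{k},\bphi)=0$,
\begin{equation*}
\Bigl(\tfrac{\bPhir^{k+1}-\bPhir^{k}}{\Delta t},\bphi\Bigr)+\nu\bigl(\nabla\bPhir^{k+1},\nabla\bphi\bigr)
=\nu\bigl(\nabla\bfeta^{k+1},\nabla\bphi\bigr)+\mathcal N^{k+1}(\bphi)+\mathcal C^{k+1}(\bphi),
\end{equation*}
where $\mathcal N^{k+1}(\bphi):=b^{*}(\bu^{k+1},\bu^{k+1},\bphi)-b^{*}(\overline{\bur^{k+1}}^{r},\bur^{k+1},\bphi)$, and $\mathcal C^{k+1}$ gathers the backward--Euler consistency term and the pressure term $-(p^{k+1},\nabla\cdot\bphi)=-(p^{k+1}-q_{h},\nabla\cdot\bphi)$ (for arbitrary $q_{h}\in Q^{h}$, since $\bXr\subset\bV^{h}$), which is $O(h^{m})$ by~\eqref{eq:fem_err_p}.

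The crux --- and the step I expect to be the main obstacle --- is the nonlinear term with $\bphi:=\bPhir^{k+1}$. I would write
\begin{equation*}
\mathcal N^{k+1}(\bPhir^{k+1})
=b^{*}\!\bigl(\bu^{k+1}-\overline{\bur^{k+1}}^{r},\bu^{k+1},\bPhir^{k+1}\bigr)
+b^{*}\!\bigl(\overline{\bur^{k+1}}^{r},\bfeta^{k+1}-\bPhir^{k+1},\bPhir^{k+1}\bigr),
\end{equation*}
noting $b^{*}(\overline{\bur^{k+1}}^{r},\bPhir^{k+1},\bPhir^{k+1})=0$ by~\eqref{eq:b_skewsymm} and decomposing the first slot as $\bu^{k+1}-\overline{\bur^{k+1}}^{r}=(\bu^{k+1}-\overline{\bu^{k+1}}^{r})+\overline{\bfeta^{k+1}}^{r}-\overline{\bPhir^{k+1}}^{r}$ by linearity of the DF. Each surviving piece is bounded with~\eqref{eq:b_bound_1} and Young's inequality (each time absorbing a $\tfrac{\nu}{16}\|\nabla\bPhir^{k+1}\|^{2}$ into the viscous term), using $\|\overline{\bfeta^{k+1}}^{r}\|\le\|\bfeta^{k+1}\|$ and $\|\nabla\overline{\bfeta^{k+1}}^{r}\|\le\|S_{r}\|_{2}^{1/2}\|\bfeta^{k+1}\|$, $\|\nabla\overline{\bPhir^{k+1}}^{r}\|\le\|\nabla\bPhir^{k+1}\|$ (Lemma~\ref{lemma:pod-filtering-0}), the uniform bound $\|\overline{\bur^{k+1}}^{r}\|\le\|\bur^{k+1}\|\le C$ (Lemma~\ref{lemma:stability}), and the filtering error estimate~\eqref{eqn:pod-filtering-7} applied to $\bu^{k+1}-\overline{\bu^{k+1}}^{r}$ (which requires $\Delta\bu^{k+1}\in\bL^{2}$). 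Since~\eqref{eqn:pod-filtering-7} controls $\delta^{2}\|\nabla(\bu^{k+1}-\overline{\bu^{k+1}}^{r})\|^{2}$, the $H^{1}$ seminorm of the filtering error enters with a factor $\delta^{-1}$, and carefully tracking the resulting $\delta$ and $\|S_{r}\|_{2}$ powers is precisely what produces the $\delta^{-1}(\cdots)$, $\delta(\cdots)$, $\delta^{3}$ and $\|S_{r}\|_{2}^{1/2}(\cdots)$ groups in~\eqref{eqn:theorem-error-1b}; the $\delta$--free first group there comes from $b^{*}(\overline{\bur^{k+1}}^{r},\bfeta^{k+1},\bPhir^{k+1})$ and from $\nu(\nabla\bfeta^{k+1},\nabla\bPhir^{k+1})$. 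The one remaining piece, $-b^{*}(\overline{\bPhir^{k+1}}^{r},\bu^{k+1},\bPhir^{k+1})\le\tfrac{\nu}{16}\|\nabla\bPhir^{k+1}\|^{2}+C\|\nabla\bu^{k+1}\|^{4}\|\bPhir^{k+1}\|^{2}$, has a coefficient that is time--summable because $\nabla\bu\in L^{4}(0,T;L^{2})$ (Assumption~\ref{assumption_regularity}); this term is what necessitates the Gronwall step.

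Finally, with $\bphi:=\bPhir^{k+1}$, I would apply the polarization identity to $(\bPhir^{k+1}-\bPhir^{k},\bPhir^{k+1})$, bound $\nu(\nabla\bfeta^{k+1},\nabla\bPhir^{k+1})$ and $\mathcal C^{k+1}(\bPhir^{k+1})$ by Young's inequality (using Assumptions~\ref{lem:fem} and~\ref{assumption_poderror}), multiply by $\Delta t$, and sum over $k=0,\dots,M-1$. Using $\bPhir^{0}=0$, $\Delta t\sum\|\nabla\bu^{k+1}\|^{2}\le C$ (Assumption~\ref{assumption:continuous-stability}), $\Delta t\sum\|\nabla\bur^{k+1}\|^{2}\le C$ (Lemma~\ref{lemma:stability}), and the uniform projection--error bounds~\eqref{eq:error_eta_no_sum}--\eqref{eq:error_geta_no_sum}, one arrives at an inequality of the form $\|\bPhir^{M}\|^{2}+\nu\Delta t\sum_{k=0}^{M-1}\|\nabla\bPhir^{k+1}\|^{2}\le C\,\cF+C\Delta t\sum_{k=0}^{M-1}\|\nabla\bu^{k+1}\|^{4}\|\bPhir^{k+1}\|^{2}$. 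The discrete Gronwall lemma --- whose use forces $\Delta t<\Delta t^{*}$, since one must first absorb the $k=M$ summand $C\Delta t\|\nabla\bu^{M}\|^{4}\|\bPhir^{M}\|^{2}$ onto the left --- then bounds $\|\bPhir^{M}\|^{2}$ and $\Delta t\sum\|\nabla\bPhir^{k+1}\|^{2}$, and the triangle inequality together with Assumption~\ref{assumption_poderror} turns this into~\eqref{eq:theorem_error_1}. The genuinely delicate point, as stressed, is the nonlinear estimate: routing $\overline{\bur^{k+1}}^{r}$ through the filter--stability and filtering--error estimates so that the emerging exponents of $\delta$ and $\|S_{r}\|_{2}$ land exactly on the terms of $\cF$, while keeping every residual $\bPhir^{k+1}$ contribution with a time--summable coefficient.
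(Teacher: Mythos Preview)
Your proposal is correct and follows essentially the same route as the paper: the same error splitting via $P_r$, the same four-term decomposition of the nonlinearity (your add--subtract of $\overline{\bu^{k+1}}^r$ in the first slot reproduces exactly the paper's terms $b^{*}(\bu^{k+1}-\overline{\bu^{k+1}}^r,\bu^{k+1},\bPhir^{k+1})$, $b^{*}(\overline{\bfeta^{k+1}}^r,\bu^{k+1},\bPhir^{k+1})$, $-b^{*}(\overline{\bPhir^{k+1}}^r,\bu^{k+1},\bPhir^{k+1})$, $b^{*}(\overline{\bur^{k+1}}^r,\bfeta^{k+1},\bPhir^{k+1})$), the same use of Lemma~\ref{lemma:pod-filtering-0} and Lemma~\ref{lemma:rom-filtering-error-estimates} on each piece, and the same discrete Gronwall closure with the $\Delta t$ restriction coming from the $\|\nabla\bu^{k+1}\|^{4}\|\bPhir^{k+1}\|^{2}$ term. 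The only cosmetic differences are that the paper writes the inequality $(\bPhir^{k+1}-\bPhir^{k},\bPhir^{k+1})\ge\tfrac12(\|\bPhir^{k+1}\|^{2}-\|\bPhir^{k}\|^{2})$ via Cauchy--Schwarz/Young rather than polarization, and it displays the intermediate add--subtract steps in the nonlinear decomposition more explicitly.
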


\begin{proof}
We start by splitting the error into two terms: 
\begin{equation}
\bu^{k+1}-\bur^{k+1} = \left(\bu^{k+1} - \bw_r^{k+1} \right) - \left( \bur^{k+1} - \bw_r^{k+1} \right) = \bfeta^{k+1} - \bPhir^{k+1}, 
\label{eq:error}
\end{equation}
where 
\begin{equation}
	\bw_r^{k+1} := P_r(\bu^{k+1}) \, .
	\label{eqn:l-rom-error-estimates-1}
\end{equation}
The first term in~\eqref{eq:error}, $\bfeta^{k+1}=\bu^{k+1} - \bw_r^{k+1}$, represents the difference between $\bu^{k+1}$ and its $L^2$ projection on $\bX^r$, which has been bounded in Lemma~\ref{lemma_proj}. 
The second term, $\bPhir^{k+1}$, is the remainder. 
Next, we construct the error equation. We first evaluate the weak formulation of the NSE \eqref{eq:nse_weak} at $t=t^{k+1}$ and let $\bv = \bPhir^{k+1}$, then subtract the L-ROM \eqref{eqn:l-rom-be} from it. 
We obtain
\begin{eqnarray}
&& \hspace*{-0.5cm}
\left( \bu^{k+1}_t , \bPhir^{k+1} \right) - \left( \frac{\bur^{k+1}-\bur^k}{\Delta t} , \bPhir^{k+1} \right)
+ \nu \left( \nabla \bu^{k+1} - \nabla \bur^{k+1}, \nabla \bPhir^{k+1} \right)  \nonumber \\ 
&& \hspace*{-0.5cm}
 + b^*\left( \bu^{k+1},\bu^{k+1},\bPhir^{k+1} \right) - b^*\left( \overline{\bur^{k+1}}^r, \bur^{k+1}, \bPhir^{k+1} \right)
- \left( p , \nabla \cdot \bPhir^{k+1} \right)
= 0. \qquad
\label{eq:error-eqn}
\end{eqnarray}
By subtracting and adding the difference quotient term, $\left(\frac{\bu^{k+1}-\bu^k}{\Delta t}, \bPhir^{k+1} \right)$, in \eqref{eq:error-eqn}, and applying the decomposition \eqref{eq:error}, we have
\begin{eqnarray}
&& \hspace*{-0.5cm}
\left( \bu^{k+1}_t - \frac{\bu^{k+1}-\bu^k}{\Delta t}, \bPhir^{k+1} \right) 
+\frac{1}{\Delta t}\left( \bfeta^{k+1} - \bPhir^{k+1}, \bPhir^{k+1} \right)  \nonumber \\
&& \hspace*{-0.5cm}
-\frac{1}{\Delta t} \left( \bfeta^k - \bPhir^k, \bPhir^{k+1} \right)
+\nu \left( \nabla \left( \bfeta^{k+1}-\bPhir^{k+1}\right) , \nabla \bPhir^{k+1} \right)  \nonumber \\ 
&& \hspace*{-0.5cm}
+ b^*\left( \bu^{k+1},\bu^{k+1},\bPhir^{k+1} \right)
- b^* \left( \overline{\bur^{k+1}}^r , \bur^{k+1}, \bPhir^{k+1} \right)
 - \left( p , \nabla \cdot \bPhir^{k+1} \right)
= 0.\qquad 
\label{eq:error-eqn2}
\end{eqnarray}
Note that \eqref{eqn:rom-L2-projection-2} implies that $\left( \bfeta^k , \bPhir^{k+1} \right) = 0$ and $\left( \bfeta^{k+1}, \bPhir^{k+1} \right) = 0$. 
Letting $\br^k = \bu_t^{k+1} - \frac{\bu^{k+1}-\bu^k}{\Delta t}$, 
we obtain
\begin{eqnarray}
&&\frac{1}{\Delta t}\left( \bPhir^{k+1}, \bPhir^{k+1}\right)  -\frac{1}{\Delta t}\left( \bPhir^k, \bPhir^{k+1}\right) 
+\nu  \left( \, \nabla \bPhir^{k+1} , \nabla \bPhir^{k+1}  \right) \nonumber \\
&&= \left( \br^k, \bPhir^{k+1} \right) 
+ \nu \left( \nabla \bfeta^{k+1} , \nabla \bPhir^{k+1}  \right)  
+ b^*\left( \bu^{k+1},\bu^{k+1}, \bPhir^{k+1}\right)  
\nonumber \\ 
&& - b^*\left( \overline{\bur^{k+1}}^r , \bur^{k+1}, \bPhir^{k+1}\right) 
- \left( p , \nabla \cdot \bPhir^{k+1} \right)   \, .
\label{eq:error-eqn3}
\end{eqnarray}
We estimate the LHS of \eqref{eq:error-eqn3} by applying the Cauchy-Schwarz and Young inequalities:
\begin{eqnarray}
\text{LHS} &=& \frac{1}{\Delta t}\left\| \bPhir^{k+1} \right\| ^2 -\frac{1}{\Delta t}\left( \bPhir^{k} , \bPhir^{k+1} \right) 
+\nu\left\| \nabla \bPhir^{k+1}  \right\| ^2 \nonumber \\
&\geq& \frac{1}{2 \Delta t}\left(\left\| \bPhir^{k+1} \right\| ^2 - \left\| \bPhir^{k} \right\| ^2\right)+\nu\left\| \nabla \bPhir^{k+1}  \right\| ^2 \, .
\label{eq:error-rhs}
\end{eqnarray} 
Using~\eqref{eq:error-eqn3} and \eqref{eq:error-rhs}, we obtain
\begin{eqnarray}
&&\left\| \bPhir^{k+1} \right\| ^2 - \left\| \bPhir^{k} \right\| ^2
+2\nu\Delta t\left\| \nabla \bPhir^{k+1}  \right\| ^2 \nonumber \\
&\leq& 2\Delta t\left( \br^k, \bPhir^{k+1}  \right) 
+ 2\nu\Delta t \left( \nabla \bfeta^{k+1} , \nabla \bPhir^{k+1}   \right)  
+ 2 \Delta t\, b^*\left( \bu^{k+1},\bu^{k+1}, \bPhir^{k+1} \right)  \nonumber \\ 
&& 
- 2\Delta t\, b^*\left( \overline{\bur^{k+1}}^r, \bur^{k+1}, \bPhir^{k+1} \right) 
- 2\Delta t \, \left( p , \nabla \cdot \bPhir^{k+1}  \right)  \, .
\label{eq:ineqn1}
\end{eqnarray} 
Using the Cauchy-Schwarz and Young inequalities in \eqref{eq:ineqn1}, we get
\begin{equation}
\left( \br^k, \bPhir^{k+1}  \right) \leq
\left\| \br^k \right\|_{-1} \, \left\| \nabla \bPhir^{k+1} \right\|  \leq
C_1 \lp \left\| \br^k \right\|_{-1}^2 + \left\| \nabla \bPhir^{k+1} \right\| ^2 \rp \, ,
\label{eq:error-lhs-1} 
\end{equation}
\begin{equation}
\nu \left( \nabla \bfeta^{k+1} , \nabla \bPhir^{k+1}  \right) \leq
\nu \left\| \nabla \bfeta^{k+1} \right\| \, \left\| \nabla \bPhir^{k+1}  \right\|  \leq
 C_2 \lp \left\| \nabla \bfeta^{k+1} \right\| ^2 
+ \left\| \nabla \bPhir^{k+1}  \right\| ^2 \rp \, .
\label{eq:error-lhs-4} 
\end{equation}
The nonlinear terms in \eqref{eq:ineqn1} can be written as follows: 
\begin{eqnarray}
& & b^*\lp \bu^{k+1},\bu^{k+1}, \bPhir^{k+1} \rp  
- b^*\lp \overline{\bur^{k+1}}^r, \bur^{k+1}, \bPhir^{k+1} \rp  \nonumber \\
&=& b^*\lp \bu^{k+1},\bu^{k+1}, \bPhir^{k+1} \rp  
- b^*\lp \overline{\bu^{k+1}}^r, \bu^{k+1}, \bPhir^{k+1} \rp \nonumber \\
& & + \, b^*\lp \overline{\bu^{k+1}}^r, \bu^{k+1}, \bPhir^{k+1} \rp
- b^*\lp \overline{\bur^{k+1}}^r, \bur^{k+1}, \bPhir^{k+1} \rp  \nonumber
\end{eqnarray}
\begin{eqnarray}
&& \hspace*{-1.0cm}
= b^*\lp \bu^{k+1} - \overline{\bu^{k+1}}^r , \bu^{k+1} , \bPhir^{k+1} \rp 
+ \, b^*\lp \overline{\bu^{k+1}}^r , \bu^{k+1} , \bPhir^{k+1} \rp
\nonumber \\
& & \hspace*{-1.0cm} 
- b^*\lp \overline{\bur^{k+1}}^r , \bu^{k+1} , \bPhir^{k+1} \rp 
+ \, b^*\lp \overline{\bur^{k+1}}^r , \bu^{k+1} , \bPhir^{k+1} \rp
- b^*\lp \overline{\bur^{k+1}}^r, \bur^{k+1}, \bPhir^{k+1} \rp \nonumber \\
&&  \hspace*{-1.0cm}
= b^*\lp \bu^{k+1} - \overline{\bu^{k+1}}^r , \bu^{k+1} , \bPhir^{k+1} \rp \nonumber \\
& &  \hspace*{-1.0cm} 
+ \, b^*\lp \overline{\bu^{k+1} - \bur^{k+1}}^r , \bu^{k+1} , \bPhir^{k+1} \rp
+ b^*\lp \overline{\bur^{k+1}}^r , \bu^{k+1} - \bur^{k+1} , \bPhir^{k+1} \rp \nonumber \\
&&  \hspace*{-1.0cm}
= b^*\lp \bu^{k+1} - \overline{\bu^{k+1}}^r , \bu^{k+1} , \bPhir^{k+1} \rp 
+ \, b^*\lp \overline{\bfeta^{k+1}}^r , \bu^{k+1} , \bPhir^{k+1} \rp
\nonumber \\
& &  \hspace*{-1.0cm} 
- b^*\lp \overline{\bPhir^{k+1}}^r , \bu^{k+1} , \bPhir^{k+1} \rp 
+ \ b^*\lp \overline{\bur^{k+1}}^r , \bfeta^{k+1} , \bPhir^{k+1} \rp
- \cancelto{0}{b^*\lp \overline{\bur^{k+1}}^r , \bPhir^{k+1} , \bPhir^{k+1} \rp} \, ,
\nonumber \\
\label{eq:error-lhs-56} 
\end{eqnarray}
where in the last term we have used \eqref{eq:b_skewsymm}. 
In \eqref{eq:error-lhs-56}, we apply Lemma~\ref{lem:b*}, Lemma~\ref{lemma:pod-filtering-0}, Lemma~\ref{lemma:rom-filtering-error-estimates} and the Cauchy-Schwarz and Young inequalities:

\begin{eqnarray}
&& \hspace*{-0.8cm} 
b^*\left( \overline{\bur^{k+1}}^r , \bfeta^{k+1} , \bPhir^{k+1} \right)  
\stackrel{\eqref{eq:b_bound_1}}{\leq}
C \, \left\| \overline{\bur^{k+1}}^r \right\|^{1/2}  \, 
\left\| \nabla \overline{\bur^{k+1}}^r \right\|^{1/2} \,  
\left\| \nabla \bfeta^{k+1} \right\| \, 
\left\| \nabla \bPhir^{k+1} \right\| \nonumber \\
&& \hspace*{2.0cm} 
\stackrel{\eqref{eqn:pod-filtering-6b}, \eqref{eqn:pod-filtering-6d}}{\leq}
C \, \left\| \bur^{k+1} \right\|^{1/2}  \, 
\left\| \nabla \bur^{k+1} \right\|^{1/2} \,  
\left\| \nabla \bfeta^{k+1} \right\| \, 
\left\| \nabla \bPhir^{k+1} \right\| \nonumber \\
&& \hspace*{2.4cm} 
\leq C_3 \, \lp \, \left\| \bur^{k+1} \right\| \,  \left\| \nabla \bur^{k+1} \right\| \,  \left\| \nabla \bfeta^{k+1}\right\| ^2 
+  \left\| \nabla \bPhir^{k+1} \right\| ^2 \rp \, ;
\label{eq:error-lhs-56-1} 
\end{eqnarray}

\begin{eqnarray}
&& \hspace*{-0.8cm} 
b^*\left( \overline{\bfeta^{k+1}}^r , \bu^{k+1} , \bPhir^{k+1} \right)  
\stackrel{\eqref{eq:b_bound_1}}{\leq}
C \, \left\| \overline{\bfeta^{k+1}}^r \right\|^{1/2} \, 
\left\| \nabla \overline{\bfeta^{k+1}}^r \right\|^{1/2} \, 
\left\| \nabla \bu^{k+1}\right\|  \, 
\left\| \nabla \bPhir^{k+1} \right\|  \nonumber \\
&& \hspace*{2.0cm} 
\stackrel{\eqref{eqn:pod-filtering-6b}, \eqref{eqn:pod-filtering-6c}}{\leq}
C \, \left\| \bfeta^{k+1} \right\|^{1/2} \, 
\| S_r \|_2^{1/4} \, \left\| \bfeta^{k+1} \right\|^{1/2} \, 
\left\| \nabla \bu^{k+1}\right\|  \, 
\left\| \nabla \bPhir^{k+1} \right\|  \nonumber \\
&& \hspace*{2.4cm} 
\leq C_4 \, \lp \| S_r \|_2^{1/2} \, \left\| \bfeta^{k+1} \right\|^2 \, \left\| \nabla \bu^{k+1}\right\| ^2  + \left\| \nabla \bPhir^{k+1} \right\| ^2 \rp \, ;
\label{eq:error-lhs-56-2} 
\end{eqnarray}

\begin{eqnarray}
b^*\left( \overline{\bPhir^{k+1}}^r , \bu^{k+1} , \bPhir^{k+1} \right) 
&\stackrel{\eqref{eq:b_bound_1}}{\leq}&
C \, \left\| \overline{\bPhir^{k+1}}^r \right\| ^{1/2} \, 
\left\| \nabla \overline{\bPhir^{k+1}}^r \right\|^{1/2} \, 
\left\| \nabla \bu^{k+1}\right\| \, 
\left\| \nabla \bPhir^{k+1} \right\|  \nonumber \\
&\stackrel{\eqref{eqn:pod-filtering-6b}, \eqref{eqn:pod-filtering-6d}}{\leq}&
C \, \left\| \bPhir^{k+1} \right\| ^{1/2} \, 
\left\| \nabla \bPhir^{k+1} \right\|^{1/2} \, 
\left\| \nabla \bu^{k+1}\right\| \, 
\left\| \nabla \bPhir^{k+1} \right\|  \nonumber \\
&=& C \left\|  \bPhir^{k+1} \right\| ^{\half} \, \left\| \nabla \bu^{k+1}\right\|  \, \left\| \nabla \bPhir^{k+1} \right\| ^{\frac{3}{2}} \nonumber \\
&\leq&
C_5 \, \lp \left\| \nabla \bu^{k+1}\right\| ^4 \, \left\| \bPhir^{k+1} \right\| ^2  + 
 \left\| \nabla \bPhir^{k+1} \right\| ^2 \rp \, .
\label{eq:error-lhs-56-3} 
\end{eqnarray}

\begin{eqnarray}
	&& \hspace*{-1.4cm} 
	b^*\lp \bu^{k+1} - \overline{\bu^{k+1}}^r , \bu^{k+1} , \bPhir^{k+1} \rp 
	\nonumber \\
	&& \hspace*{-1.4cm} \stackrel{\eqref{eq:b_bound_1}}{\leq}
	C \, \lnorm \bu^{k+1} - \overline{\bu^{k+1}}^r \rnorm^{1/2} \, 
	\lnorm \nabla \lp \bu^{k+1} - \overline{\bu^{k+1}}^r \rp  \rnorm^{1/2} \, 
	\left\| \nabla \bu^{k+1}\right\|  \, 
	\left\| \nabla \bPhir^{k+1} \right\| 
	\nonumber \\
	&& \hspace*{-1.4cm} 
	\leq C_6 \, \lp
		\lnorm \bu^{k+1} - \overline{\bu^{k+1}}^r \rnorm \, 
		\lnorm \nabla \lp \bu^{k+1} - \overline{\bu^{k+1}}^r \rp  \rnorm \, 
		\left\| \nabla \bu^{k+1} \right\|^2
		+ \left\| \nabla \bPhir^{k+1} \right\|^2
	\rp ,
	\label{eq:error-lhs-56-4} 
\end{eqnarray}
which can be bounded by using Lemma~\ref{lemma:rom-filtering-error-estimates}.
Since $\bPhir^{k+1}  \in \bX^r \subset \bV^h$, the pressure term on the RHS of \eqref{eq:ineqn1} can be written as 
\begin{equation}
- \left( p , \nabla \cdot \bPhir^{k+1}  \right)
= - \left( p - q_h, \nabla \cdot \bPhir^{k+1}  \right) \, ,
\label{eq:error-lhs-pressure_1} 
\end{equation}
where $q_h$ is any function in $Q^h$.
Thus, the pressure term can be estimated as follows by using the Cauchy-Schwarz  and Young inequalities:
\begin{equation}
	- \left( p , \nabla \cdot \bPhir^{k+1}  \right)
	\leq C_7 \, \lp 
		\left\| p - q_h \right\|^2
		 + \left\| \nabla \bPhir^{k+1}  \right\|^2 
		 \rp \, .
	\label{eq:error-lhs-pressure_2} 
\end{equation}

Choosing $C_1-C_7$ appropriately,  then substituting inequalities~\eqref{eq:error-lhs-1}--\eqref{eq:error-lhs-4}, \eqref{eq:error-lhs-56-1}--\eqref{eq:error-lhs-56-3}, and~\eqref{eq:error-lhs-56-4} in \eqref{eq:ineqn1}, we obtain
\begin{eqnarray}
&& \hspace*{-1.0cm}
\left\| \bPhir^{k+1} \right\| ^2 - \left\| \bPhir^{k} \right\| ^2 + 
 C_8 \, \Delta t \left\| \nabla \bPhir^{k+1}  \right\| ^2 
\nonumber \\
&& \hspace*{-1.4cm}
\leq C_9 \biggl(
	\Delta t \, \left\| \br^k \right\|_{-1}^2 
	+ \Delta t \, \left\| \nabla \bfeta^{k+1} \right\| ^2          
	+ \Delta t \, \left\| \bur^{k+1} \right\| \, \left\| \nabla \bur^{k+1} \right\| \,  \left\| \nabla \bfeta^{k+1} \right\|^2
	\nonumber \\
	&& \hspace*{-1.4cm} 
	+ \Delta t \, \| S_r \|_2^{1/2} \, \left\| \nabla \bu^{k+1} \right\|^2  \, \left\| \bfeta^{k+1} \right\|^2    
	+ \Delta t \, \left\| \nabla \bu^{k+1}\right\| ^4 \, \left\| \bPhir^{k+1} \right\| ^2
	\nonumber \\
	&& \hspace*{-1.4cm} 
	+ \lnorm \bu^{k+1} - \overline{\bu^{k+1}}^r \rnorm \, 
		\lnorm \nabla \lp \bu^{k+1} - \overline{\bu^{k+1}}^r \rp  \rnorm \, 
		\left\| \nabla \bu^{k+1} \right\|^2
	+ \Delta t \, \left\| p - q_h \right\|^2 \biggr) \, .
\label{eq:error-eqn4}
\end{eqnarray}
Summing \eqref{eq:error-eqn4} from $k=0$ to $k=M-1$, we have 
\begin{eqnarray}
	&& \hspace*{-0.8cm}
	\left\| \bPhir^{M} \right\|^2 
	+ C_8 \, \Delta t \sum\limits_{k=0}^{M-1} \left\| \nabla \bPhir^{k+1}  \right\| ^2 
	\leq
	\left\| \bPhir^{0} \right\|^2 
	+ C_9 \, \Delta t \, \biggl(
	\sum\limits_{k=0}^{M-1} \left\| \br^k \right\|_{-1}^2 
	\nonumber \\
	&& \hspace*{-0.3cm} 
	+ \sum\limits_{k=0}^{M-1}  \left\| \bur^{k+1} \right\| \, \left\| \nabla \bur^{k+1} \right\| \,  \left\| \nabla \bfeta^{k+1} \right\|^2 
	+ \| S_r \|_2^{1/2} \, \sum\limits_{k=0}^{M-1} \left\| \nabla \bu^{k+1} \right\|^2  \, \left\|  \bfeta^{k+1} \right\|^2  
	\nonumber \\
	&& \hspace*{-0.3cm} 
	+ \sum\limits_{k=0}^{M-1} \left\| \nabla \bu^{k+1}\right\| ^4 \, \left\| \bPhir^{k+1} \right\| ^2 
	+ \sum\limits_{k=0}^{M-1} \left\| p - q_h \right\|^2  
	+ \sum\limits_{k=0}^{M-1} \left\| \nabla \bfeta^{k+1} \right\| ^2        
	\nonumber \\
	&& \hspace*{-0.3cm}
	+ \sum\limits_{k=0}^{M-1} \lnorm \bu^{k+1} - \overline{\bu^{k+1}}^r \rnorm \, 
		\lnorm \nabla \lp \bu^{k+1} - \overline{\bu^{k+1}}^r \rp  \rnorm \, 
		\left\| \nabla \bu^{k+1} \right\|^2
	\biggl) \, . 
	\label{eqn:error-eqn5}
\end{eqnarray}
The first term on the RHS of \eqref{eqn:error-eqn5} vanishes, since $\bu_r^0 = \bw_r^0$. 
By using the Poincar\'{e}-Friedrichs inequality, the second term on the RHS of \eqref{eqn:error-eqn5} can be estimated as follows (see, e.g., \cite{iliescu2014variational}): 
\begin{equation}
	\Delta t \, \sum\limits_{k=0}^{M-1} \left\| \br^k \right\|_{-1}^2 
	\leq 
	C_{10} \, \Delta t \, \sum\limits_{k=0}^{M-1} \left\| \br^k \right\|^2 
	\leq 
	C_{11} \, \Delta t^2 \, .
	\label{eq:proof1}
\end{equation}
Using \eqref{eq:error_geta}, the third term on the RHS of \eqref{eqn:error-eqn5} can be estimated as follows:
\begin{eqnarray}
&& \hspace*{-0.5cm}
\Delta t \sum \limits_{k=0}^{M-1} \| \nabla \bfeta^{k+1}\|^2
\leq C_{12} \, \biggl(
h^{2m}   
+ \|S_r\|_2 h^{2m+2}
+ \left( 1+\|S_r\|_2 \right) \Delta t^2
	\nonumber \\
	&& \hspace*{4.0cm}
+ \sum\limits_{j=r+1}^d \|\bphi_j\|_1^2\, \lambda_j\,
\biggr).
\label{eqn:3.42_rhs_4_and_8}
\end{eqnarray}
To estimate the fourth term on the RHS of \eqref{eqn:error-eqn5}, we use Lemma~\ref{lemma:stability} and Assumption~\ref{assumption_poderror}:
\begin{eqnarray}
	&& \hspace*{-0.2cm}
	\Delta t \sum \limits_{k=0}^{M-1} \left\| {\bf u}_{r}^{k+1} \right\| \, \left\| \nabla {\bf u}_{r}^{k+1} \right\| \, \left\| \nabla \bfeta^{k+1} \right\|^2
	\stackrel{\eqref{stability:c}}{\leq} 
	\widetilde{C}_{13} \, \Delta t \sum \limits_{k=0}^{M-1} \left\| \nabla {\bf u}_{r}^{k+1} \right\| \, \left\| \nabla \bfeta^{k+1} \right\|^2 
	\nonumber \\
	&& \hspace*{-1.0cm}
	\stackrel{\eqref{stability:c}}{\leq} 
	C_{13} \, \left\| \nabla \bfeta^{k+1} \right\|^2 
	\stackrel{\eqref{eq:error_geta_no_sum}}{\leq}
	C_{13} \,
	\Big(
		h^{2m}   
		+ \|S_r\|_2 h^{2m+2}
		+ \left( 1+\|S_r\|_2 \right) \Delta t^2 
	\nonumber \\
	&& \hspace*{4.0cm}
	+ \sum\limits_{j=r+1}^d \|\bphi_j\|_1^2\, \lambda_j\,
	\Big),
\label{eqn:3.42_rhs_5d}
\end{eqnarray}
where we used estimate \eqref{eq:error_geta_no_sum} in the derivation of \eqref{eqn:3.42_rhs_5d}.
Using \eqref{eq:error_geta} would not have been enough for the asymptotic convergence of \eqref{eqn:3.42_rhs_5d}.

Using Assumption~\ref{assumption_poderror}, the fifth term on the RHS of \eqref{eqn:error-eqn5} can be bounded as follows:
\begin{eqnarray}
&& \Delta t \, \| S_r \|_2^{1/2} \, \sum \limits_{k=0}^{M-1} \left\| \nabla {\bf u}^{k+1} \right\|^2 \, \left\| \bfeta^{k+1} \right\|^2
\nonumber \\
&\stackrel{\eqref{eq:error_eta_no_sum} }{\leq}&
\widetilde{C}_{14} \, \Delta t \, \| S_r \|_2^{1/2} \, \sum \limits_{k=0}^{M-1} \left\| \nabla {\bf u}^{k+1} \right\|^2 \, 
\left(
	h^{m+1}   
	+ \Delta t
	+ \sqrt{\sum\limits_{j=r+1}^d \lambda_j } \ 
\right)^2
\nonumber \\
&\stackrel{\eqref{eqn:continuous-stability}}{\leq}&
C_{14} \, \, \| S_r \|_2^{1/2} \, 
\left(
	h^{m+1}   
	+ \Delta t
	+ \sqrt{\sum\limits_{j=r+1}^d \lambda_j } \ 
\right)^2
\, .
\label{eqn:3.42_rhs_6}
\end{eqnarray}
To estimate the seventh term on the RHS of \eqref{eqn:error-eqn5}, we use Lemma~\ref{lemma:rom-filtering-error-estimates}:
\begin{eqnarray}
	&& \hspace*{-0.3cm}
	\Delta t \, \sum\limits_{k=0}^{M-1} 
		\lnorm \bu^{k+1} - \overline{\bu^{k+1}}^r \rnorm \, 
		\lnorm \nabla \lp \bu^{k+1} - \overline{\bu^{k+1}}^r \rp  \rnorm \, 
		\left\| \nabla \bu^{k+1} \right\|^2
	\nonumber \\
	&& \hspace*{-1.0cm}
	\stackrel{\eqref{eqn:pod-filtering-7}}{\leq}
	\widetilde{C}_{15} \, \Delta t \, \sum\limits_{k=0}^{M-1} \left\| \nabla \bu^{k+1} \right\|^2 \ 
		\frac{1}{\delta} \, 
		\biggl[
		\biggl(
			h^{2 m +2} + \Delta t^2 + \sum_{j=r+1}^{d} \lambda_j	
		\biggr)
	\nonumber \\
	&& \hspace*{-1.0cm}
		+ \, \delta^2 \, 
		\biggl( 
			h^{2 m}   
			+ \|S_r\|_2 \, h^{2 m+2}
			+ (1+\|S_r\|_2) \, \Delta t^2
			+\sum _{j = r+1}^{d} \|\bphi_j\|_1^2\, \lambda_j \,
		\biggr)
		+ \, \delta^4 
		\biggr]
	\nonumber \\
	&& \hspace*{-1.0cm}
	\stackrel{\eqref{eqn:continuous-stability}}{\leq}
		C_{15} \, \frac{1}{\delta} \, 
		\biggl[
		\biggl(
			h^{2 m +2} + \Delta t^2 + \sum_{j=r+1}^{d} \lambda_j	
		\biggr)
	\nonumber \\
	&& \hspace*{-1.0cm}
		+ \, \delta^2 \, 
		\biggl( 
			h^{2 m}   
			+ \|S_r\|_2 \, h^{2 m+2}
			+ (1+\|S_r\|_2) \, \Delta t^2
			+\sum _{j = r+1}^{d} \|\bphi_j\|_1^2\, \lambda_j \,
		\biggr)
		+ \, \delta^4 
		\biggr].
	\label{eqn:3.42_rhs_6c}
\end{eqnarray}

Since in \eqref{eq:error-lhs-pressure_1} $q_h$ was an arbitrary function in $Q^h$, we can use the approximation property \eqref{eq:fem_err_p} in Assumption~\ref{lem:fem} to bound the eighth term on the RHS of \eqref{eqn:error-eqn5} as follows: 
\begin{eqnarray}
\Delta t \, \sum\limits_{k=0}^{M-1} \left\| p - q_h \right\|^2
\leq C_{16} \, h^{2 m} \, .
\label{eqn:3.42_rhs_6b}
\end{eqnarray}

Collecting \eqref{eq:proof1}-\eqref{eqn:3.42_rhs_6b}, equation~\eqref{eqn:error-eqn5} becomes
\begin{eqnarray}
	&& 
	\left\| \bPhir^{M} \right\|^2 
	+ C_8 \, \Delta t \sum\limits_{k=0}^{M-1} \left\| \nabla \bPhir^{k+1}  \right\| ^2 
	\nonumber \\
	\leq
	C_9 \, \biggl\{
		&& \Delta t \, \sum\limits_{k=0}^{M-1} \left\| \nabla \bu^{k+1}\right\| ^4 \, \left\| \bPhir^{k+1} \right\| ^2 
	\nonumber \\
	&+& \left(
h^{2m}   
+ \|S_r\|_2 h^{2m+2}
+ \left( 1+\|S_r\|_2 \right) \Delta t^2
+ \sum\limits_{j=r+1}^d \|\bphi_j\|_1^2\, \lambda_j\,
\right)
\nonumber \\
&+& 
	\| S_r \|_2^{1/2}
	\left(
	h^{m+1}   
	+ \Delta t
	+ \sqrt{\sum\limits_{j=r+1}^d \lambda_j } \ 
\right)^2
	+ h^{2 m} 
	+ \Delta t^2
\nonumber 
\end{eqnarray}

\begin{eqnarray}
	&& \hspace*{-1.40cm}
		+ \, \frac{1}{\delta} \, 
		\biggl[
		\biggl(
			h^{2 m +2} + \Delta t^2 + \sum_{j=r+1}^{d} \lambda_j	
		\biggr)
	\nonumber \\
	&& \hspace*{-1.4cm}
		+ \, \delta^2 \, 
		\biggl( 
			h^{2 m}   
			+ \|S_r\|_2 \, h^{2 m+2}
			+ (1+\|S_r\|_2) \, \Delta t^2
			+\sum _{j = r+1}^{d} \|\bphi_j\|_1^2\, \lambda_j \,
		\biggr)
		+ \, \delta^4 
		\biggr]
	\ \biggr\} 
	\nonumber \\
	=
	C_9 \, \biggl\{
		&&  \hspace*{-0.7cm} 
		\Delta t \, \sum\limits_{k=0}^{M-1} \left\| \nabla \bu^{k+1}\right\| ^4 \, \left\| \bPhir^{k+1} \right\| ^2 
	\nonumber \\
	&&  \hspace*{-1.0cm}
	+ \left(
h^{2m}   
+ \|S_r\|_2 h^{2m+2}
+ \left( 1+\|S_r\|_2 \right) \Delta t^2
+ \sum\limits_{j=r+1}^d \|\bphi_j\|_1^2\, \lambda_j\,
\right)
	\nonumber \\
&& \hspace*{-1.0cm} 
	+ \| S_r \|_2^{1/2}
	\left(
	h^{2m+2}   
	+ \Delta t^2
	+ \sum\limits_{j=r+1}^d \lambda_j  \ 
\right)
		+ \frac{1}{\delta} \, 
		\biggl(
			h^{2 m +2} + \Delta t^2 + \sum_{j=r+1}^{d} \lambda_j	
		\biggr)
	\nonumber \\
	&& \hspace*{-1.0cm}
		+ \, \delta \, 
		\biggl( 
			h^{2 m}   
			+ \|S_r\|_2 \, h^{2 m+2}
			+ (1+\|S_r\|_2) \, \Delta t^2
			+\sum _{j = r+1}^{d} \|\bphi_j\|_1^2\, \lambda_j \,
		\biggr)
		+ \, \delta^3 
	\ \biggr\}
	\nonumber \\
	\stackrel{notation}{=}&& 
	C_9 \biggl\{
	\Delta t \, \sum\limits_{k=0}^{M-1} \left\| \nabla \bu^{k+1}\right\| ^4 \, \left\| \bPhir^{k+1} \right\| ^2 
	\nonumber \\
	&& + \cF\biggl( \delta, h, \Delta t, \| S_r \|_2, \{ \lambda_j \}_{j=r+1}^{d}, \{ \|\bphi_j\|_1 \}_{j=r+1}^{d}\biggr)
	\biggr\} \, .
	\label{eq:proof5}
\end{eqnarray}
A discrete Gronwall lemma (see Lemma 27 in \cite{layton2008introduction} and Lemma 5.1 in~\cite{heywood1990finite}) implies that, for small enough $\Delta t$ (i.e., $\Delta t < \bigl( C_{9} \, \max_{1 \leq k \leq M} \| \nabla \bu^{k} \|^4 \bigr)^{-1}$), the following inequality holds:
\begin{eqnarray}
	&& 
	\left\| \bPhir^{M} \right\|^2 
	+ C_8 \, \Delta t \sum\limits_{k=0}^{M-1} \left\| \nabla \bPhir^{k+1}  \right\| ^2 
	\nonumber \\
	&\leq&
	C_{17}  \cF\biggl( \delta, h, \Delta t, \| S_r \|_2, \{ \lambda_j \}_{j=r+1}^{d}, \{ \|\bphi_j\|_1 \}_{j=r+1}^{d}\biggr) \, .
	\label{eq:proof5b}
\end{eqnarray}
By using~\eqref{eq:proof5b}, the triangle inequality, and~\eqref{eq:error_geta}--\eqref{eq:error_eta_no_sum}, yields 

\begin{eqnarray}
&& \left\| {\bf u}^M - {\bf u}_r^M \right\|^2
+ \Delta t \, \sum \limits_{k = 0}^{M-1} \left\| \nabla \left( {\bf u}^{k+1} - {\bf u}_{r}^{k+1}\right) \right\|^2
\nonumber \\
&\leq&
C \, \cF\biggl( \delta, h, \Delta t, \| S_r \|_2, \{ \lambda_j \}_{j=r+1}^{d}, \{ \|\bphi_j\|_1 \}_{j=r+1}^{d}\biggr) \, .
	\label{eqn:proof5c}
\end{eqnarray}
This completes the proof. 
 
\end{proof}

\section{Numerical Results}
	\label{sec:numerical-results}

In this section, we perform a numerical investigation of the theoretical results obtained in Section \ref{sec:error-analysis}. 
To this end, we investigate whether the ROM filtering error estimate in Lemma~\ref{lemma:rom-filtering-error-estimates} and the L-ROM approximation error estimate in Theorem \ref{theorem_error} are recovered numerically.

In the numerical investigation, we consider the same test problem and computational setting as those used in Section IV.B in~\cite{iliescu2014variational}. 
The problem is governed by the 2D incompressible NSE with an analytical solution. 
The exact velocity, $\bu = (u,v)$, has the components $u = \frac{2}{\pi} \arctan(-500(y-t) ) \sin(\pi y)$, $v = \frac{2}{\pi}\arctan(-500(x-t) ) \sin(\pi x)$, and the exact pressure is given by $p = 0$.
The diffusion coefficient is $\nu=10^{-3}$ and the forcing term is chosen to match the exact solution.
The spatial domain $[0, 1]\times [0, 1]$ is discretized by the Taylor-Hood FEs with the mesh size $h=1/64$. 
To generate the POD basis, snapshots are collected over the time interval $[0, 1]$ at every $\Delta T = 10^{-2}$ by recording the exact values of $u$ and $v$ on the FE mesh.  
Following the ansatz in~\eqref{eq:pod}, we do not use the common centering trajectory approach; instead, we apply the method of snapshots to the snapshot data directly. 
The dimension of the POD basis is 101.

\subsection{ROM Filtering Error}
	\label{sec:rom-filtering-error}

In this section, we perform a numerical investigation of the ROM filtering error~\eqref{eqn:pod-filtering-7} in Lemma~\ref{lemma:rom-filtering-error-estimates}.
We define the following average squared filtering errors:
$
\cE_{L^2}
=\frac{1}{M+1} \, \sum \limits_{k=0}^{M} \| \bu^{k} - \overline{\bu^{k}}^r \|^2, 
\cE_{H^1}
=
\frac{1}{M+1} \, \sum \limits_{k=0}^{M} \| \nabla (\bu^{k} - \overline{\bu^{k}}^r) \|^2.
$
The ROM filtering error bound~\eqref{eqn:pod-filtering-7} in Lemma~\ref{lemma:rom-filtering-error-estimates} depends on the parameters $h, \Delta t, \delta$ as well as the ROM truncation errors
$
	\Lambda_{L^2}^r
	= \sum \limits_{j=r+1}^{d} \lambda_j , \, 
	\Lambda_{H^1}^r
	= \sum \limits_{j=r+1}^{d} \| \bphi_j \|_{1}^{2} \, \lambda_j \, .
$
We numerically investigate the rates of convergence of $\cE_{L^2}$ and $\cE_{H^1}$ with respect to the time step $\Delta t$, filter radius $\delta$, and ROM truncation error $\Lambda_{H^1}^r$. 

First, we investigate the convergence rates with respect to $\delta$.
To this end, we fix $h=1/64$, $r=95$ and $\Delta t = 10^{-4}$ (note that the time step size should not matter in this test because the snapshots are FE interpolants of the exact solutions), and vary $\delta$.
With these choices, $h^{2 m} = \cO(10^{-8})$, $\Lambda_{L^2}^r = \cO(10^{-8})$, $\Lambda_{H^1}^r = \cO(10^{-3})$ and $\| S_r \|_2 = \cO(10^{5})$.
Thus, the theoretical error estimate~\eqref{eqn:pod-filtering-7} in Lemma~\ref{lemma:rom-filtering-error-estimates} yields the following asymptotic scaling:
\begin{eqnarray}
	\cE_{L^2}
	&\stackrel{\eqref{eqn:pod-filtering-7}}{\sim}& \cO(\delta^2).
	\label{eqn:rom-filtering-numerical-5} 
\end{eqnarray}
Note that \eqref{eqn:pod-filtering-7} does not provide a scaling between $\cE_{H^1}$ and $\delta$.

We apply the DF~\eqref{eqn:rom-df} to the snapshot data and measure the numerical errors $\cE_{L^2}$ and $\cE_{H^1}$, which are listed in Table~\ref{tab:rom-filtering-numerical-1-test3}. 
Linear regressions of the errors, which are plotted in Fig.~\ref{fig:rom-filtering-numerical-1} for decreasing $\delta$ values, show the following scalings for the ROM filtering errors:
\begin{eqnarray}
	\cE_{L^2}
	&\sim& \cO(\delta^{2.52})
	\label{eqn:rom-filtering-numerical-7} \\
	\cE_{H^1}
	&\sim& \cO(\delta^{1.96}) \, .
	\label{eqn:rom-filtering-numerical-8} 
\end{eqnarray}
Thus, the theoretical scaling~\eqref{eqn:rom-filtering-numerical-5} is numerically recovered. 
On the other hand, although not verified theoretically in~\eqref{eqn:pod-filtering-7}, we do observe the almost quadratic convergence of $\cE_{H^1}$ with respect to $\delta$ (see Remark~\ref{remark:rom-filtering-error-estimates}). 

\begin{table}[h]
\centering
\begin{tabular}{|c|c|c|}
\hline
& &  \\[-0.2cm]
$\delta$ & $\cE_{L^2}$ & $\cE_{H^1}$ \\
& &  \\[-0.2cm]
\hline
$1\times 10^{-2}$&$3.54\times 10^{-3}$&$9.87\times 10^1$ \\
\hline
$5\times 10^{-3}$&$9.14\times 10^{-4}$&$4.65\times 10^1$ \\
\hline
$2.5\times 10^{-3}$&$1.63\times 10^{-4}$&$1.22\times 10^1$ \\
\hline
$2.0\times 10^{-3}$&$8.41\times 10^{-5}$&$6.79\times 10^{0}$ \\
\hline
$1.67\times 10^{-3}$&$4.71\times 10^{-5}$&$3.97\times 10^{0}$ \\
\hline
$1.25\times 10^{-3}$&$1.77\times 10^{-5}$&$1.56\times 10^0$ \\
\hline
\end{tabular} \\
\caption{
	Average ROM filtering errors $\cE_{L^2}$ and $\cE_{H^1}$ for decreasing $\delta$ values.
\label{tab:rom-filtering-numerical-1-test3}
}
\end{table}

\begin{figure}[h]
\centering
\includegraphics[width=.45\textwidth]{./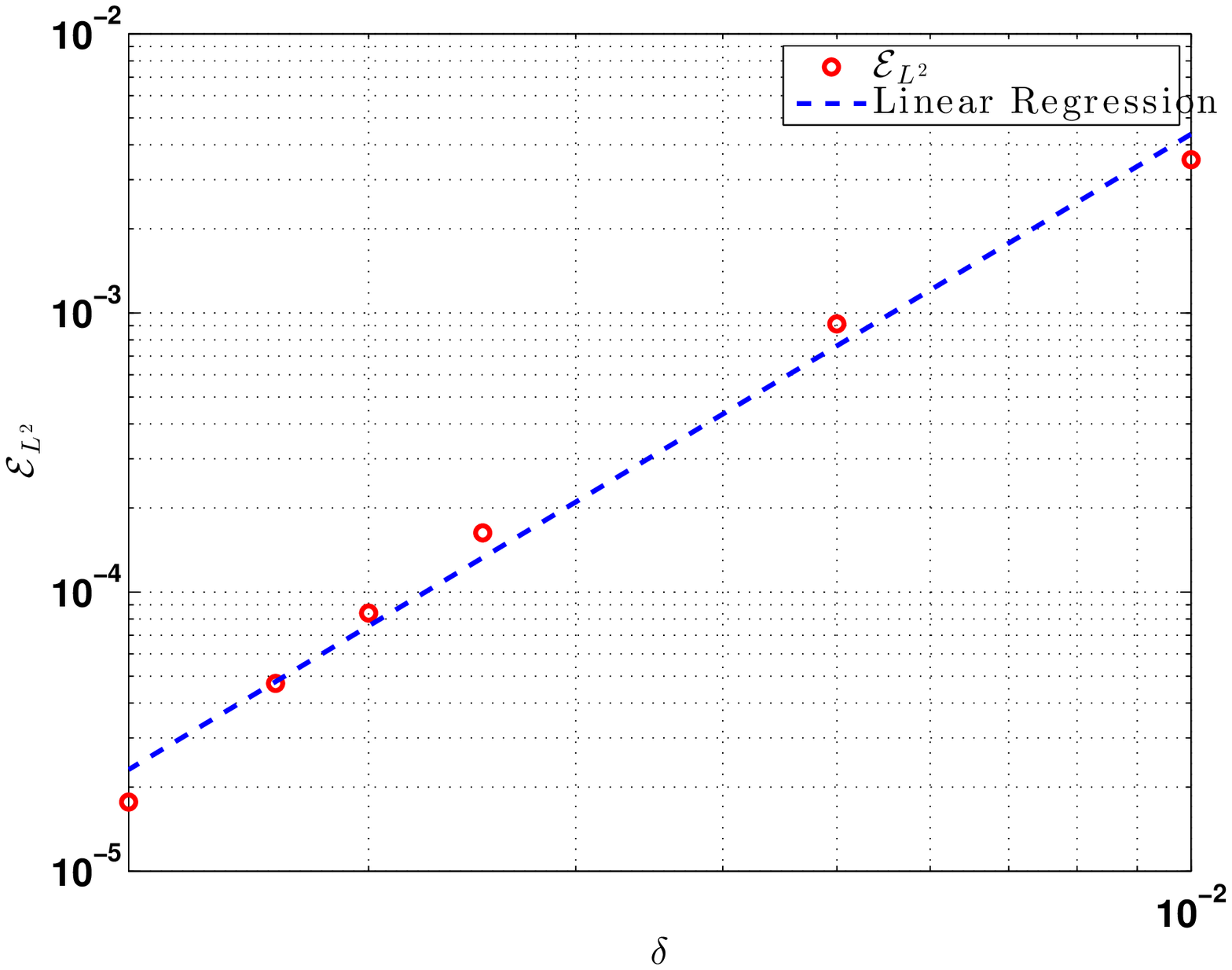}
\hfill
\includegraphics[width=.45\textwidth]{./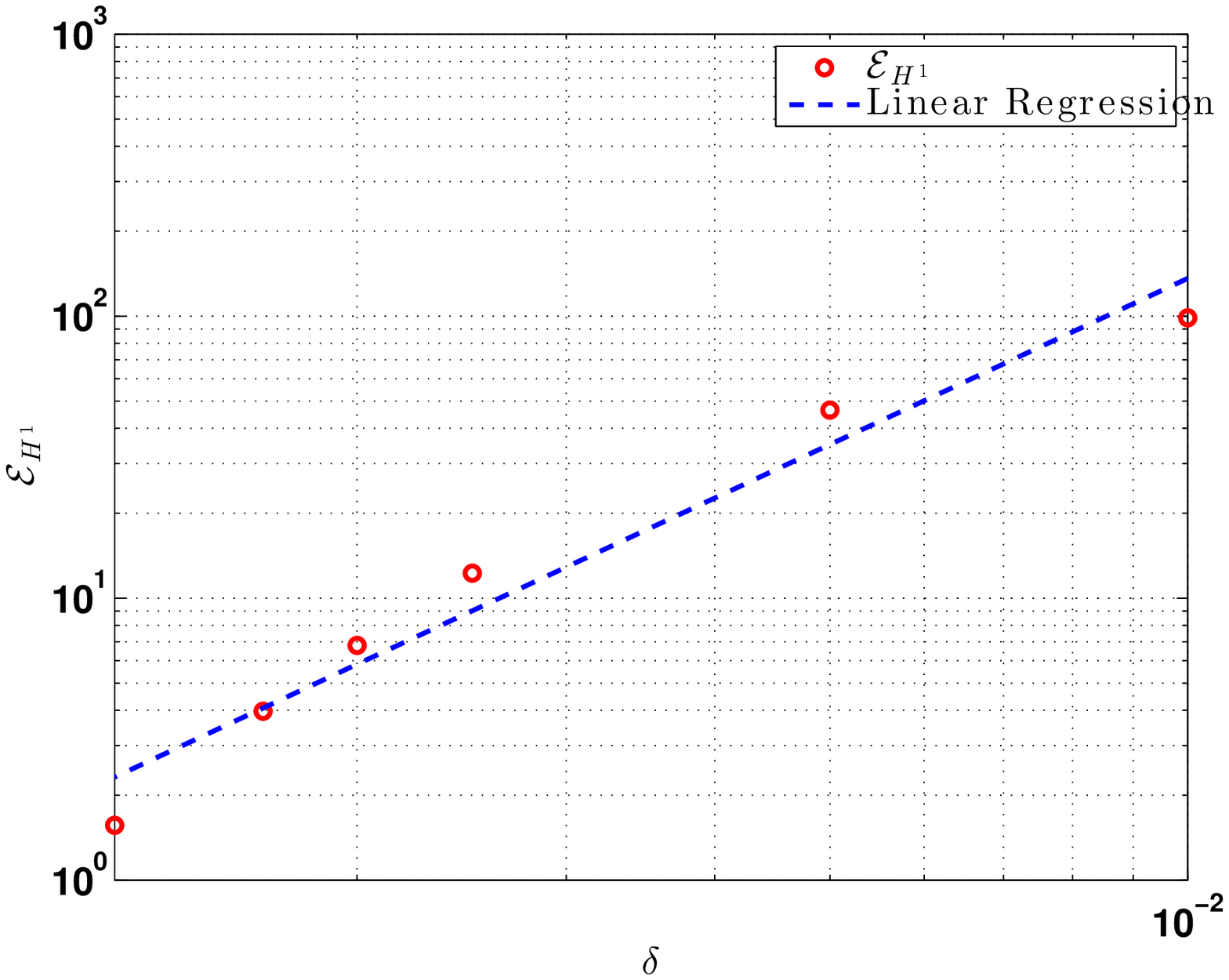}
\caption{Linear regression of $\cE_{L^2}$ and $\cE_{H^1}$ with respect to $\delta$.}
\label{fig:rom-filtering-numerical-1} 
\end{figure}

Next, we investigate the convergence rates with respect to $\Lambda_{H^1}^r$.
To this end, we fix $h=1/64$, $\Delta t = 10^{-4}$, $\delta = 10^{-3}$ and vary $r$.
With these choices, $h^{2 m} = \cO(10^{-8})$, $\delta^2 = \cO(10^{-6})$ and $\| S_r \|_2 = \cO(10^{4})-\cO(10^{5})$.
Thus, the theoretical error estimate~\eqref{eqn:pod-filtering-7} in Lemma~\ref{lemma:rom-filtering-error-estimates} yields the following asymptotic scalings:
\begin{eqnarray}
	\cE_{L^2}
	&\stackrel{\eqref{eqn:pod-filtering-7}}{\sim}& \cO(\Lambda_{H^1}^r)
	\label{eqn:rom-filtering-numerical-9} \\
	\cE_{H^1}
	&\stackrel{\eqref{eqn:pod-filtering-7}}{\sim}& \cO(\Lambda_{H^1}^r) \, .
	\label{eqn:rom-filtering-numerical-10} 
\end{eqnarray}

\begin{table}[h]
\centering
\begin{tabular}{|c|c|c|c|}
\hline
& & & \\[-0.2cm]
$r$ & $\Lambda_{H^1}^r$ & $\cE_{L^2}$ & $\cE_{H^1}$ \\
& & & \\[-0.2cm]
\hline
30 & $1.23\times 10^2$ & $3.29\times 10^{-3}$ & $1.23\times 10^{2}$\\
\hline
40 & $9.26\times 10^1$ & $1.70\times 10^{-3}$ & $9.27\times 10^{1}$\\
\hline
50 & $6.73\times 10^1$ & $9.05\times 10^{-4}$& $6.74\times 10^{1}$\\
\hline
60 & $4.44\times 10^1$ & $4.91\times 10^{-4}$& $4.46\times 10^{1}$\\
\hline
70 & $2.09\times 10^1$ & $2.39\times 10^{-4}$ & $2.14\times 10^{1}$\\
\hline
80 & $6.42\times 10^0$ & $8.11\times 10^{-5}$ &$7.06\times 10^{0}$\\
\hline
\end{tabular} \\
\caption{
	Average ROM filtering errors $\cE_{L^2}$ and $\cE_{H^1}$ for increasing $r$ values.
\label{tab:rom-filtering-numerical-2}
}
\end{table}

\begin{figure}[h]
\centering
\includegraphics[width=.45\textwidth]{./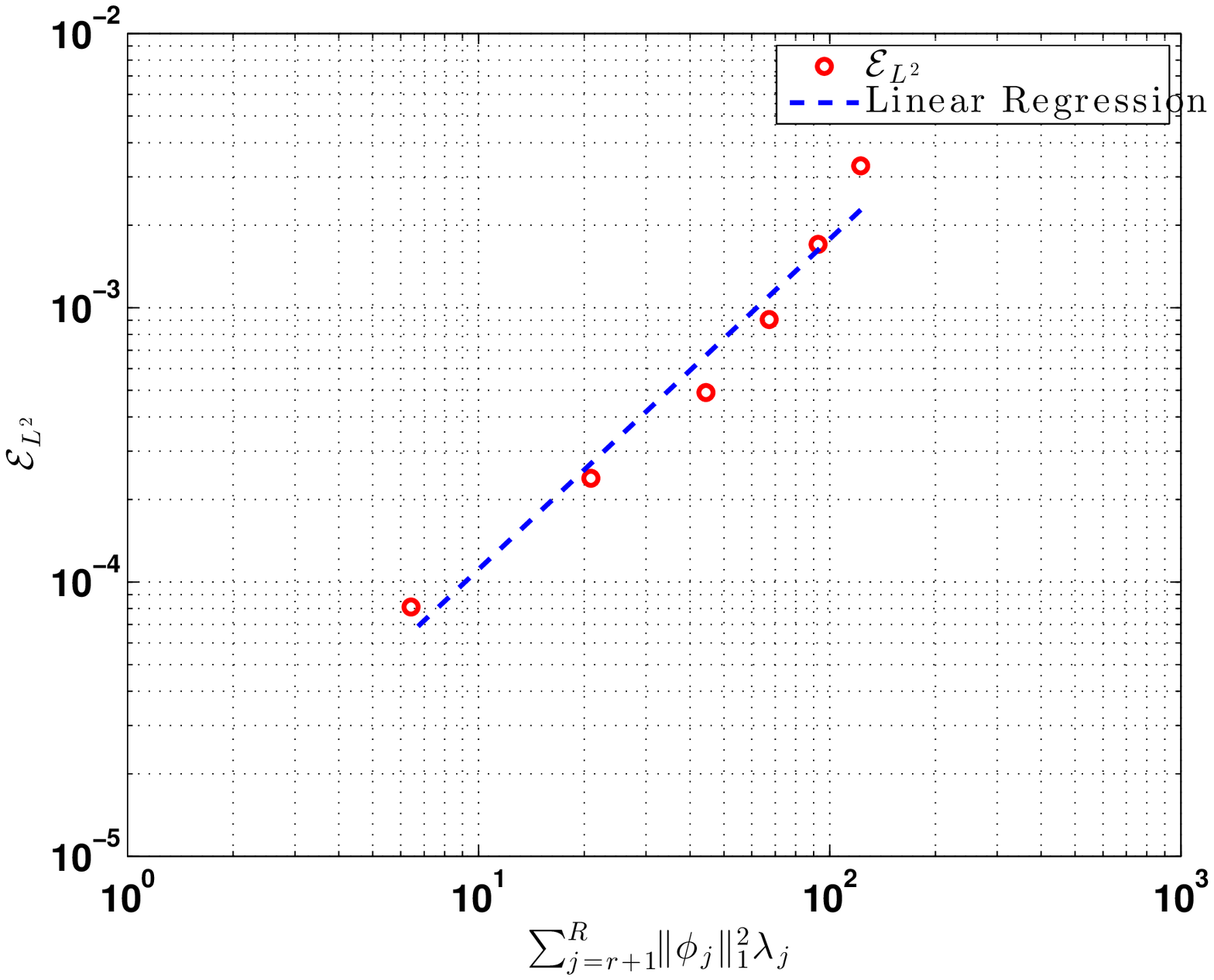}
\hfill
\includegraphics[width=.45\textwidth]{./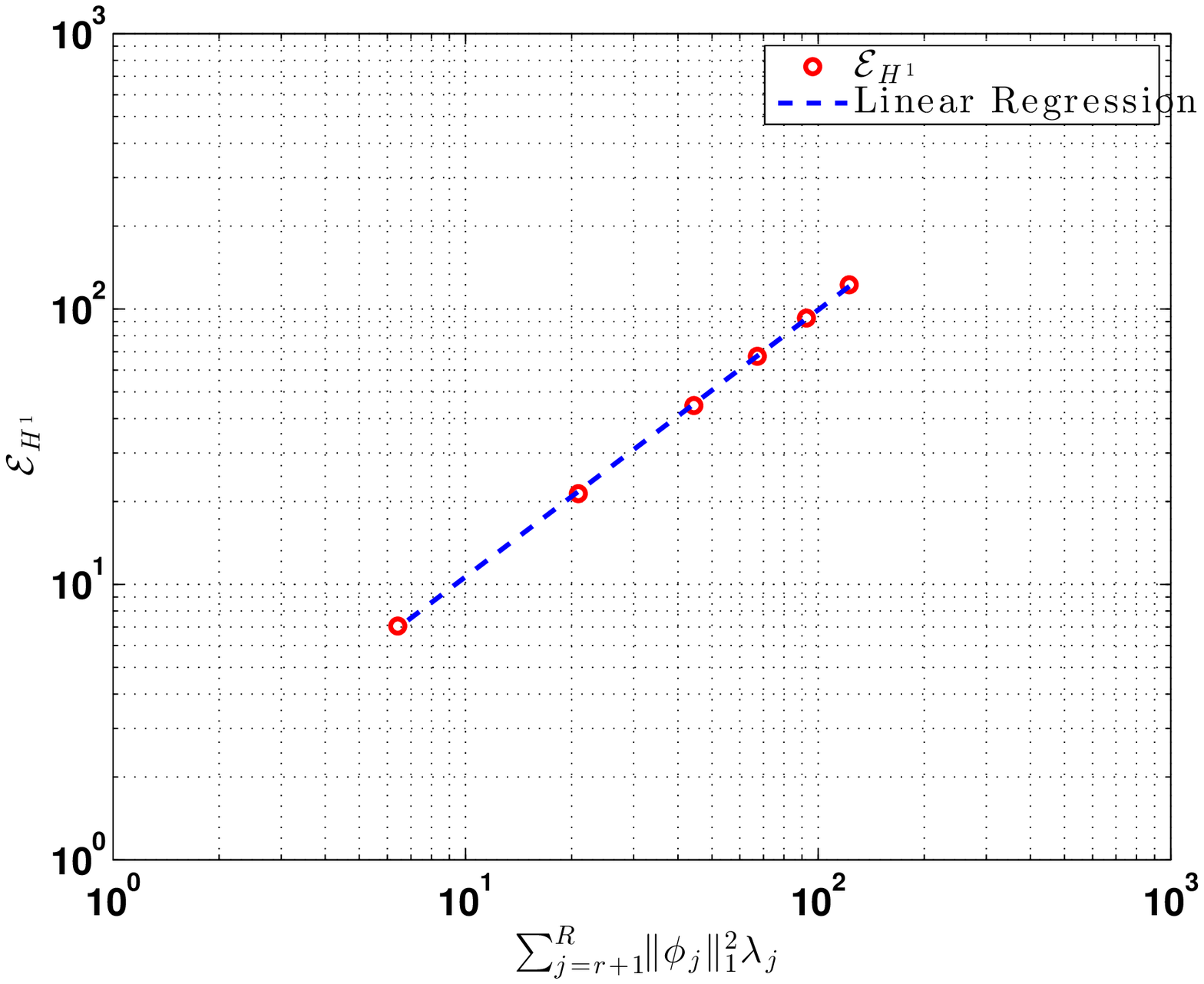}
\caption{Linear regression of $\cE_{L^2}$ and $\cE_{H^1}$ with respect to $\Lambda_{H^1}^r$.}
\label{fig:rom-filtering-numerical-2} 
\end{figure}

The numerical errors $\cE_{L^2}$ and $\cE_{H^1}$ are listed in Table~\ref{tab:rom-filtering-numerical-2} for increasing $r$ values.
The corresponding linear regressions, which are shown in Fig.~\ref{fig:rom-filtering-numerical-2}, indicate the following scalings between the average ROM filtering errors and the ROM truncation error:
\begin{eqnarray}
	\cE_{L^2}
	&\sim& \cO\left((\Lambda_{H^1}^r)^{1.20}\right)
	\label{eqn:rom-filtering-numerical-11} \\
	\cE_{H^1}
	&\sim& \cO\left((\Lambda_{H^1}^r)^{0.97}\right) \, .
	\label{eqn:rom-filtering-numerical-12} 
\end{eqnarray}
Thus, the theoretical scalings~\eqref{eqn:rom-filtering-numerical-9} and~\eqref{eqn:rom-filtering-numerical-10} are numerically recovered.

\subsection{L-ROM Approximation Error}
	\label{sec:l-rom-approximation-error}

In this section, we perform a numerical investigation of the L-ROM approximation error estimate~\eqref{eq:theorem_error_1} in Theorem \ref{theorem_error}.
The L-ROM approximation error at the final time step is
$
\cE_{L^2}^M
= \| \bu^{M} - \overline{\bu^{M}}^r \| \, .
$
We numerically investigate the rates of convergence of $\cE_{L^2}^M$ with respect to the time step $\Delta t$, filter radius $\delta$ and ROM truncation error $\Lambda_{H^1}^r$. 
To this end, we first note that in our numerical investigation $\delta < 1$ and $\| S_r \|_2 \gg 1$.
Thus, the L-ROM approximation error bound~\eqref{eqn:theorem-error-1b} in Theorem \ref{theorem_error} simplifies to the following:
\begin{eqnarray}
	&& \cF\biggl( \delta, h, \Delta t, \| S_r \|_2, \{ \lambda_j \}_{j=r+1}^{d}, \{ \|\bphi_j\|_1 \}_{j=r+1}^{d}\biggr)
	\nonumber \\
	&=& 
h^{2m}   
+ \|S_r\|_2 h^{2m+2}
+ \|S_r\|_2 \,  \Delta t^2
+\Lambda_{H^1}^r \,
	\nonumber \\
&+& 
	\| S_r \|_2^{1/2} \, \Lambda_{L^2}^r  \ 
		+ \frac{1}{\delta} \, 
		\biggl(
			h^{2 m +2} + \Delta t^2 + \Lambda_{L^2}^r	
		\biggr)
	+ \delta^3 \, .
	\label{eqn:l-rom-approximation-error-1}
\end{eqnarray}
The control parameters in the L-ROM approximation error rates of convergence in~\eqref{eqn:l-rom-approximation-error-1} are $h, \Delta t, r, \delta$.

To determine the L-ROM approximation error rate of convergence with respect to $\Delta t$, we fix $h=1/64$, $r=99$, $\delta = 10^{-4}$ and vary $\Delta t$.
With these choices, $h^{2m}= \cO(10^{-8})$, $\Lambda_{L^2}^r = \cO(10^{-8})$, $\Lambda_{H^1}^r = \cO(10^{-3})$ and $\| S_r \|_2 = \cO(10^{5})$. 
Thus, the theoretical L-ROM approximation error bound~\eqref{eqn:l-rom-approximation-error-1} predicts the following rate of convergence of $\cE_{L^2}^M$ with respect to $\Delta t$:
\begin{equation}
	\cE_{L^2}^M 
	= \cO(\Delta t) \, .
	\label{eqn:l-rom-approximation-error-2}
\end{equation}
The L-ROM approximation error $\cE_{L^2}^M$ is listed in Table \ref{L-ROM-1} for decreasing $\Delta t$ values.
The corresponding linear regression, which is shown in Fig.~\ref{fig:LROM-fig-1}, indicates the following L-ROM approximation error rate of convergence with respect to $\Delta t$:
\begin{equation}
	\cE_{L^2}^M 
	= \cO(\Delta t^{0.99}) \, .
	\label{eqn:l-rom-approximation-error-3}
\end{equation}
Thus, the theoretical rate of convergence~\eqref{eqn:l-rom-approximation-error-2} is numerically recovered.

\begin{minipage}{\textwidth}
\begin{minipage}[b]{0.49\linewidth}
\centering
\begin{tabular}{|c|c|}
\hline
$\Delta t$&$\cE_{L^2}^M$\\
\hline
\multirow{1}{*}{$1\times10^{-2}$}      & $2.36\times 10^{-2}$\\
\multirow{1}{*}{$5\times10^{-3}$}      & $2.33\times 10^{-2}$\\
\multirow{1}{*}{$2.5\times10^{-3}$}    & $6.49\times 10^{-3}$\\
\multirow{1}{*}{$1.25\times10^{-3}$}  & $3.49\times 10^{-3}$ \\
\multirow{1}{*}{$6.25\times10^{-4}$}  & $1.96\times 10^{-3}$ \\
\hline
\end{tabular}
\captionof{table}{
	L-ROM approximation error $\cE_{L^2}^M$ for decreasing $\Delta t$ values.
\label{L-ROM-1}}
\end{minipage}
\hfill
\begin{minipage}[b]{0.45\linewidth}
\centering
\includegraphics[width=1\textwidth]{./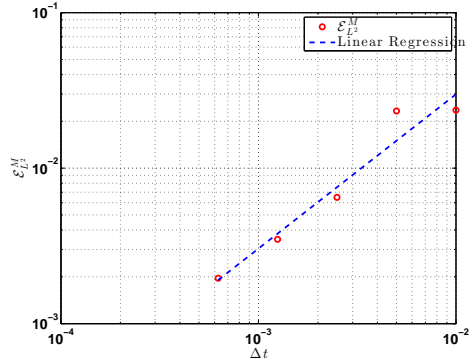}
\captionof{figure}{Linear regression of $\cE_{L^2}^M$ with respect to $\Delta t$.}
\label{fig:LROM-fig-1} 
\end{minipage}
\end{minipage}

\bigskip

To determine the L-ROM approximation error rate of convergence with respect to $\delta$, we fix $h=1/64$, $r=99$, $\Delta t = 10^{-4}$ and vary $\delta$.
With these choices, $h^{2m}= \cO(10^{-8})$, $\Delta t^2 = \cO(10^{-8})$, $\Lambda_{L^2}^r = \cO(10^{-8})$, $\Lambda_{H^1}^r = \cO(10^{-3})$ and $\| S_r \|_2 = \cO(10^{5})$. 
Thus, the theoretical L-ROM approximation error bound~\eqref{eqn:l-rom-approximation-error-1} predicts the following rate of convergence of $\cE_{L^2}^M$ with respect to $\delta$: 
\begin{equation}
	\cE_{L^2}^M 
	= \cO(\delta^{3/2}) \, .
	\label{eqn:l-rom-approximation-error-4}
\end{equation}
The L-ROM approximation error $\cE_{L^2}^M$ is listed in Table \ref{L-ROM-22} for decreasing $\delta$ values.
The corresponding linear regression, which is shown in Fig.~\ref{fig:LROM-fig-22}, indicates the following L-ROM approximation error rate of convergence with respect to $\delta$:
\begin{equation}
	\cE_{L^2}^M 
	= \cO(\delta^{2.09}) \, .
	\label{eqn:l-rom-approximation-error-5}
\end{equation}
We note that the numerical rate of convergence~\eqref{eqn:l-rom-approximation-error-5} is higher than the theoretical rate of convergence~\eqref{eqn:l-rom-approximation-error-4} (see Remark~\ref{remark:rom-filtering-error-estimates}).

\begin{minipage}{\textwidth}
  \begin{minipage}[b]{0.49\textwidth}
\centering
\begin{tabular}{|c|c|}
\hline
$\delta$&$\cE_{L^2}^M$\\
\hline
\multirow{1}{*}{$5\times 10^{-1}$}      & $8.47\times 10^{-1}$\\
\multirow{1}{*}{$2.5\times 10^{-1}$}      & $4.15\times 10^{-1}$\\
\multirow{1}{*}{$1.25\times 10^{-1}$}   & $1.14\times 10^{-1}$\\
\multirow{1}{*}{$6.25\times 10^{-2}$}   & $1.96\times 10^{-2}$\\
\multirow{1}{*}{$3.12\times 10^{-2}$}  & $2.81\times 10^{-3}$\\
\multirow{1}{*}{$1.56\times 10^{-2}$}  & $9.59\times 10^{-4}$\\
\hline
\end{tabular}
\captionof{table}{
	L-ROM approximation error $\cE_{L^2}^M$ for decreasing $\delta$ values.
\label{L-ROM-22}}
\end{minipage}
\hfill
\begin{minipage}[b]{0.45\linewidth}
\centering
\includegraphics[width=1\textwidth]{./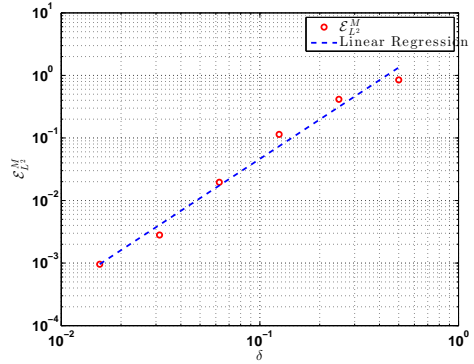}
\captionof{figure}{Linear regression of $\cE_{L^2}^M$ with respect to $\delta$.}
\label{fig:LROM-fig-22} 
\end{minipage}
\end{minipage}

\bigskip

To determine the L-ROM approximation error rate of convergence with respect to $\Lambda^r_{H^1}$, we fix $h=1/64$, $\Delta t = 10^{-4}$, $\delta = 10^{-2}$ and vary $r$.
With these choices, $h^{2m}= \cO(10^{-8})$ and $\| S_r \|_2 = \cO(10^{2})-\cO(10^{5})$. 
Thus, the theoretical L-ROM approximation error bound~\eqref{eqn:l-rom-approximation-error-1} predicts the following rate of convergence of $\cE_{L^2}^M$ with respect to $\Lambda^r_{H^1}$:
\begin{equation}
	\cE_{L^2}^M
	= \cO(\Lambda^r_{H^1}) \, .
	\label{eqn:l-rom-approximation-error-6}
\end{equation}
The L-ROM approximation error $\cE_{L^2}^M$ is listed in Table \ref{L-ROM-3} for increasing $r$ values.
The corresponding linear regression, which is shown in Fig.~\ref{fig:LROM-fig-2}, indicates the following L-ROM approximation error rate of convergence with respect to $\Lambda^r_{H^1}$:
\begin{equation}
	\cE_{L^2}^M 
	= \cO((\Lambda^r_{H^1})^{1.53}) \, .
	\label{eqn:l-rom-approximation-error-7}
\end{equation}
Thus, the theoretical rate of convergence~\eqref{eqn:l-rom-approximation-error-6} is numerically recovered.

\begin{minipage}{\textwidth}
  \begin{minipage}[b]{0.49\textwidth}
    \centering
    \begin{tabular}{|c|c|c|}
	\hline
	r&$\Lambda^r_{H^1}$&$\cE_{L^2}^M$\\
	\hline
	10& \multirow{1}{*}{$1.99\times10^{2}$}      & $9.62\times 10^{-2}$\\
	20& \multirow{1}{*}{$1.57\times10^{2}$}      & $5.15\times 10^{-2}$\\
	30& \multirow{1}{*}{$1.22\times10^{2}$}      & $3.05\times 10^{-2}$\\
	40& \multirow{1}{*}{$9.26\times10^{1}$}    & $2.09\times 10^{-2}$\\
	50& \multirow{1}{*}{$6.73\times10^{1}$}  & $1.83\times 10^{-2}$\\
	\hline
    \end{tabular}
	\captionof{table}{
	L-ROM approximation error $\cE_{L^2}^M$ for increasing $r$ values.
	\label{L-ROM-3}}
  \end{minipage}
  \hfill
  \begin{minipage}[b]{0.45\textwidth}
    \centering
	\includegraphics[width=1\textwidth]{./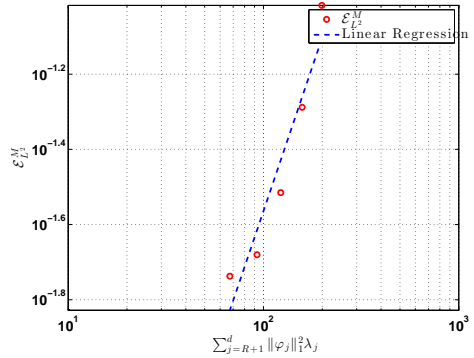}
	\captionof{figure}{Linear regression of $\cE_{L^2}^M$ with respect to $\Lambda^r_{H^1}$.}
	\label{fig:LROM-fig-2} 
    \end{minipage}
\end{minipage}

\section{Conclusions and Outlook}
	\label{sec:conclusions}
	
Several modeling strategies have been proposed to alleviate the spurious numerical oscillations that generally appear 	when standard ROMs are used to simulate convection-dominated flows.
Reg-ROMs are recently proposed ROMs in which numerical stabilization is achieved through explicit ROM spatial filtering.
Reg-ROMs were successfully used in~\cite{wells2017evolve} in the numerical simulation of the 3D flow past a circular cylinder at a Reynolds number $Re = 1000$.
Reg-ROMs were also employed in~\cite{iliescu2017regularized} for the stabilization of ROMs in the numerical simulation of a stochastic Burgers equation.
To our knowledge, however, there is no numerical analysis of the Reg-ROMs and the explicit ROM spatial filter.

In this paper, we took a first step in this direction and, in Theorem~\ref{theorem_error}, we proved error estimates for the FE discretization of one such Reg-ROM, the L-ROM~\cite{iliescu2017regularized,sabetghadam2012alpha,wells2017evolve}.
In Lemma~\ref{lemma:rom-filtering-error-estimates}, we also proved error estimates for the FE discretization of  the ROM differential filter, which is the explicit ROM spatial filter that we used in the construction of the L-ROM.
Finally, in Section~\ref{sec:numerical-results}, we provided a numerical verification of the ROM filtering error estimate derived in Lemma~\ref{lemma:rom-filtering-error-estimates} and the L-ROM approximation error estimate in Theorem \ref{theorem_error}.
In our numerical investigation, we considered the 2D incompressible NSE with an analytical solution and small diffusion coefficient $\nu=10^{-3}$, which is the computational setting used in~\cite{iliescu2014variational}. 

There are several research directions that could be pursued.
As noted in Remark~\ref{remark:rom-filtering-error-estimates}, one could try to extend from the FE setting to the ROM setting Lemma 2.4 in~\cite{dunca2013mathematical} instead of Lemma 2.12 in~\cite{layton2008numerical}, as we did in Lemma~\ref{lemma:rom-filtering-error-estimates}, since the former could yield better $\delta$ scalings of the $H^1$ seminorm of the filtering error.
However, one would probably first have to prove the $H^1$ stability of the ROM $L^2$ projection, which, to our knowledge, has not been achieved yet.
Another research direction is the extension of the numerical analysis for the L-ROM to other Reg-ROMs, such as the evolve-then-filter ROM~\cite{wells2017evolve}.
Finally, one could also try to prove error estimates for the novel large eddy simulation ROMs introduced in~\cite{xie2017approximate}, in which the explicit ROM filter error plays a central role.

\section*{Acknowledgements}

The authors greatly appreciate the financial support of the National Science Foundation through grants DMS-1522656 and DMS-1522672.

\bibliographystyle{plain}
\bibliography{traian}

\begin{thebibliography}{10}

\bibitem{balajewicz2013low}
M.~J. Balajewicz, E.~H. Dowell, and B.~R. Noack.
\newblock Low-dimensional modelling of high-{R}eynolds-number shear flows
  incorporating constraints from the {N}avier--{S}tokes equation.
\newblock {\em J. Fluid Mech.}, 729:285--308, 2013.

\bibitem{balajewicz2016minimal}
M.~J. Balajewicz, I.~Tezaur, and E.~H. Dowell.
\newblock {Minimal subspace rotation on the Stiefel manifold for stabilization
  and enhancement of projection-based reduced order models for the compressible
  Navier--Stokes equations}.
\newblock {\em J. Comput. Phys.}, 321:224--241, 2016.

\bibitem{ballarin2016fast}
F.~Ballarin, E.~Faggiano, S.~Ippolito, A.~Manzoni, A.~Quarteroni, G.~Rozza, and
  R.~Scrofani.
\newblock Fast simulations of patient-specific haemodynamics of coronary artery
  bypass grafts based on a {POD--Galerkin} method and a vascular shape
  parametrization.
\newblock {\em J. Comput. Phys.}, 315:609--628, 2016.

\bibitem{ballarin2015supremizer}
F.~Ballarin, A.~Manzoni, A.~Quarteroni, and G.~Rozza.
\newblock Supremizer stabilization of {POD--G}alerkin approximation of
  parametrized steady incompressible {N}avier--{S}tokes equations.
\newblock {\em Int. J. Numer. Meth. Engng.}, 102:1136--1161, 2015.

\bibitem{bank1981optimal}
R.~E. Bank and T.~Dupont.
\newblock An optimal order process for solving finite element equations.
\newblock {\em Math. Comput.}, 36(153):35--51, 1981.

\bibitem{benosman2016robust}
M.~Benosman, J.~Borggaard, and B.~Kramer.
\newblock Robust reduced-order model stabilization for partial differential
  equations based on {L}yapunov theory and extremum seeking with application to
  the {3D} {B}oussinesq equations.
\newblock {\em arXiv preprint \url{http://arxiv.org/abs/1604.04586}}, 2016.

\bibitem{bertagna2014model}
L.~Bertagna and A.~Veneziani.
\newblock A model reduction approach for the variational estimation of vascular
  compliance by solving an inverse fluid--structure interaction problem.
\newblock {\em Inverse Probl.}, 30(5):055006, 2014.

\bibitem{bistrian2015improved}
D.~A. Bistrian and I.~M. Navon.
\newblock An improved algorithm for the shallow water equations model
  reduction: {D}ynamic mode decomposition vs {POD}.
\newblock {\em Int. J. Num. Meth. Fluids}, 78(9):552--580, 2015.

\bibitem{chapelle2012galerkin}
D.~Chapelle, A.~Gariah, and J.~Sainte-Marie.
\newblock Galerkin approximation with proper orthogonal decomposition: new
  error estimates and illustrative examples.
\newblock {\em ESAIM: Math. Model. Numer. Anal.}, 46:731--757, 2012.

\bibitem{cordier2010calibration}
L.~Cordier, B.~Abou El~Majd, and J.~Favier.
\newblock Calibration of {POD} reduced-order models using {T}ikhonov
  regularization.
\newblock {\em Int. J. Num. Meth. Fluids}, 63(2):269--296, 2010.

\bibitem{dunca2015vreman}
A.~A. Dunca and M.~Neda.
\newblock On the {V}reman filter based stabilization for the advection
  equation.
\newblock {\em Appl. Math. Comput.}, 269:379--388, 2015.

\bibitem{dunca2013mathematical}
A.~A. Dunca, M.~Neda, and L.~G. Rebholz.
\newblock A mathematical and numerical study of a filtering-based multiscale
  fluid model with nonlinear eddy viscosity.
\newblock {\em Comput. Math. Appl.}, 66(6):917--933, 2013.

\bibitem{ern2004theory}
A.~Ern and J.-L. Guermond.
\newblock {\em Theory and Practice of Finite Elements}, volume 159 of {\em
  Applied Mathematical Sciences}.
\newblock Springer-Verlag, New York, 2004.

\bibitem{germano1986differential-b}
M.~Germano.
\newblock Differential filters for the large eddy numerical simulation of
  turbulent flows.
\newblock {\em Phys. Fluids}, 29(6):1755--1757, 1986.

\bibitem{germano1986differential}
M.~Germano.
\newblock Differential filters of elliptic type.
\newblock {\em Phys. Fluids}, 29(6):1757--1758, 1986.

\bibitem{geurts2003regularization}
B.~J. Geurts and D.~D. Holm.
\newblock Regularization modeling for large-eddy simulation.
\newblock {\em Phys. Fluids}, 15(1):L13--L16, 2003.

\bibitem{ghommem2013mode}
M.~Ghommem, M.~Presho, V.~M. Calo, and Y.~Efendiev.
\newblock {Mode decomposition methods for flows in high-contrast porous media.
  Global--local approach}.
\newblock {\em J. Comput. Phys.}, 253:226--238, 2013.

\bibitem{giere2015supg}
S.~Giere, T.~Iliescu, V.~John, and D.~Wells.
\newblock {SUPG} reduced order models for convection-dominated
  convection-diffusion-reaction equations.
\newblock {\em Comput. Methods Appl. Mech. Engrg.}, 289:454--474, 2015.

\bibitem{gouasmi2016characterizing}
A.~Gouasmi, E.~Parish, and K.~Duraisamy.
\newblock Characterizing memory effects in coarse-grained nonlinear systems
  using the {M}ori-{Z}wanzig formalism.
\newblock {\em arXiv preprint, \url{http://arxiv.org/abs/1611.06277}}, 2016.

\bibitem{gunzburger2017ensemble}
M.~Gunzburger, N.~Jiang, and M.~Schneier.
\newblock An ensemble-proper orthogonal decomposition method for the
  nonstationary {N}avier-{S}tokes equations.
\newblock {\em SIAM J. Numer. Anal.}, 2017.
\newblock to appear.

\bibitem{hesthaven2015certified}
J.~S. Hesthaven, G.~Rozza, and B.~Stamm.
\newblock {\em Certified Reduced Basis Methods for Parametrized Partial
  Differential Equations}.
\newblock Springer, 2015.

\bibitem{heywood1990finite}
J.~G. Heywood and R.~Rannacher.
\newblock {Finite-element approximation of the nonstationary Navier-Stokes
  problem. Part IV: Error analysis for second-order time discretization}.
\newblock {\em SIAM J. Numer. Anal.}, 27(2):353--384, 1990.

\bibitem{HLB96}
P.~Holmes, J.~L. Lumley, and G.~Berkooz.
\newblock {\em Turbulence, Coherent Structures, Dynamical Systems and
  Symmetry}.
\newblock Cambridge, 1996.

\bibitem{iliescu2017regularized}
T.~Iliescu, H.~Liu, and X.~Xie.
\newblock Regularized reduced order models for a stochastic {B}urgers equation.
\newblock 2017.
\newblock submitted.

\bibitem{iliescu2014are}
T.~Iliescu and Z.~Wang.
\newblock Are the snapshot difference quotients needed in the proper orthogonal
  decomposition?
\newblock {\em SIAM J. Sci. Comput.}, 36(3):A1221--A1250, 2014.

\bibitem{iliescu2014variational}
T.~Iliescu and Z.~Wang.
\newblock Variational multiscale proper orthogonal decomposition:
  {N}avier-{S}tokes equations.
\newblock {\em Num. Meth. P.D.E.s}, 30(2):641--663, 2014.

\bibitem{kaiser2014cluster}
E.~Kaiser, B.~R. Noack, L.~Cordier, A.~Spohn, M.~Segond, M.~Abel, G.~Daviller,
  J.~{\"O}sth, S.~Krajnovi{\'c}, and R.~K. Niven.
\newblock Cluster-based reduced-order modelling of a mixing layer.
\newblock {\em J. Fluid Mech.}, 754:365--414, 2014.

\bibitem{kalashnikova2010stability}
I.~Kalashnikova and M.~F. Barone.
\newblock On the stability and convergence of a {G}alerkin reduced order model
  {(ROM)} of compressible flow with solid wall and far-field boundary
  treatment.
\newblock {\em Int. J. Num. Meth. Eng.}, 83(10):1345--1375, 2010.

\bibitem{kostova2015error}
T.~Kostova, G.~Oxberry, K.~Chand, and W.~Arrighi.
\newblock Error bounds and analysis of proper orthogonal decomposition model
  reduction methods using snapshots from the solution and the time derivatives.
\newblock {\em arXiv preprint,\url{http://arxiv.org/abs/1501.02004}}, 2015.

\bibitem{KV01}
K.~Kunisch and S.~Volkwein.
\newblock Galerkin proper orthogonal decomposition methods for parabolic
  problems.
\newblock {\em Numer. Math.}, 90(1):117--148, 2001.

\bibitem{KV02}
K.~Kunisch and S.~Volkwein.
\newblock Galerkin proper orthogonal decomposition methods for a general
  equation in fluid dynamics.
\newblock {\em SIAM J. Numer. Anal.}, 40(2):492--515 (electronic), 2002.

\bibitem{layton2008numerical}
W.~Layton, C.~C. Manica, M.~Neda, and L.~G. Rebholz.
\newblock Numerical analysis and computational testing of a high accuracy
  {L}eray-deconvolution model of turbulence.
\newblock {\em Num. Meth. P.D.E.s}, 24(2):555--582, 2008.

\bibitem{layton2008introduction}
W.~J. Layton.
\newblock {\em Introduction to the numerical analysis of incompressible viscous
  flows}, volume~6.
\newblock Society for Industrial and Applied Mathematics (SIAM), 2008.

\bibitem{layton2012approximate}
W.~J. Layton and L.~G. Rebholz.
\newblock {\em Approximate Deconvolution Models of Turbulence: Analysis,
  Phenomenology and Numerical Analysis}, volume 2042.
\newblock Springer, 2012.

\bibitem{leray1934sur}
J.~Leray.
\newblock Sur le mouvement d`un fluide visqueux emplissant l'espace.
\newblock {\em Acta Math.}, 63:193--248, 1934.

\bibitem{LCNY08}
Z.~Luo, J.~Chen, I.~M. Navon, and X.~Yang.
\newblock Mixed finite element formulation and error estimates based on proper
  orthogonal decomposition for the nonstationary {N}avier-{S}tokes equations.
\newblock {\em SIAM J. Numer. Anal.}, 47(1):1--19, 2008.

\bibitem{noack2011reduced}
B.~R. Noack, M.~Morzynski, and G.~Tadmor.
\newblock {\em Reduced-Order Modelling for Flow Control}, volume 528.
\newblock Springer Verlag, 2011.

\bibitem{osth2014need}
J.~{\"O}sth, B.~R. Noack, S.~Krajnovi{\'c}, D.~Barros, and J.~Bor{\'e}e.
\newblock On the need for a nonlinear subscale turbulence term in {POD} models
  as exemplified for a high-{R}eynolds-number flow over an {A}hmed body.
\newblock {\em J. Fluid Mech.}, 747:518--544, 2014.

\bibitem{perotto2017higamod}
S.~Perotto, A.~Reali, P.~Rusconi, and A.~Veneziani.
\newblock {HIGAMod: A Hierarchical IsoGeometric Approach for MODel reduction in
  curved pipes}.
\newblock {\em Comput. \& Fluids}, 142:21--29, 2017.

\bibitem{quarteroni2015reduced}
A.~Quarteroni, A.~Manzoni, and F.~Negri.
\newblock {\em Reduced Basis Methods for Partial Differential Equations: An
  Introduction}, volume~92.
\newblock Springer, 2015.

\bibitem{sabetghadam2012alpha}
F.~Sabetghadam and A.~Jafarpour.
\newblock $\alpha$ regularization of the {POD-G}alerkin dynamical systems of
  the {K}uramoto--{S}ivashinsky equation.
\newblock {\em Appl. Math. Comput.}, 218(10):6012--6026, 2012.

\bibitem{san2015principal}
O.~San and J.~Borggaard.
\newblock {Principal interval decomposition framework for POD reduced-order
  modeling of convective Boussinesq flows}.
\newblock {\em Int. J. Num. Meth. Fluids}, 78(1):37--62, 2015.

\bibitem{singler2014new}
J.~R. Singler.
\newblock New {POD} error expressions, error bounds, and asymptotic results for
  reduced order models of parabolic {PDE}s.
\newblock {\em SIAM J. Numer. Anal.}, 52(2):852--876, 2014.

\bibitem{volkwein2011model}
S.~Volkwein.
\newblock Model reduction using proper orthogonal decomposition.
\newblock {\em Lecture Notes, Faculty of Mathematics and Statistics, University
  of Konstanz}, 2011.
\newblock
  \url{http://www.math.uni-konstanz.de/numerik/personen/volkwein/teaching/POD-Vorlesung.pdf}.

\bibitem{wang20162d}
Y.~Wang, I.~M. Navon, X.~Wang, and Y.~Cheng.
\newblock {2D Burgers equation with large Reynolds number using POD/DEIM and
  calibration}.
\newblock {\em Int. J. Num. Meth. Fluids}, 82(12):909--931, 2016.

\bibitem{wang2012proper}
Z.~Wang, I.~Akhtar, J.~Borggaard, and T.~Iliescu.
\newblock Proper orthogonal decomposition closure models for turbulent flows: A
  numerical comparison.
\newblock {\em Comput. Meth. Appl. Mech. Eng.}, 237-240:10--26, 2012.

\bibitem{wells2017evolve}
D.~Wells, Z.~Wang, X.~Xie, and T.~Iliescu.
\newblock An evolve-then-filter regularized reduced order model for
  convection-dominated flows.
\newblock {\em Int. J. Num. Meth. Fluids}, 2017.
\newblock To appear. Available as arXiv preprint at
  \url{http://arxiv.org/abs/1506.07555}.

\bibitem{xie2017approximate}
X.~Xie, D.~Wells, Z.~Wang, and T.~Iliescu.
\newblock Approximate deconvolution reduced order modeling.
\newblock {\em Comput. Methods Appl. Mech. Engrg.}, 313:512--534, 2017.

\end{thebibliography}

\end{document}